\newtheorem{lemma}{Lemma}
\newtheorem{example}{Example}
\newtheorem{remark}{Remark}
\newcommand{\schmal} {4.2}
\newcommand{\breite} {12.95}
\definecolor{arrowblue}{RGB}{98,145,224}
\journal{arXiv}
\begin{document}

\begin{frontmatter}



\title{A second-order exponential time differencing scheme for non-linear reaction-diffusion systems with dimensional splitting}


\author[1]{E.O. Asante-Asamani}
\ead{nasmano@gmail.com}
\address[1]{Hunter College, City University of New York, New York City, NY 10065, USA}

\author[2]{A. Kleefeld\corref{cor1}}
\ead{a.kleefeld@fz-juelich.de}
\address[2]{J\"ulich Supercomputing Centre, Forschungszentrum J\"ulich GmbH, 52425 J\"ulich, Germany}
\cortext[cor1]{corresponding author}

\author[3]{B.A. Wade}
\ead{bruce.wade@louisiana.edu}
\address[3]{Department of Mathematics, University of Louisiana at Lafayette, Lafayette, LA 70504-3568, USA}

\begin{abstract}
A second-order $L$-stable exponential time-differencing (ETD) method is developed by combining an ETD scheme with approximating the matrix exponentials by rational functions 
having real distinct poles (RDP), together with a dimensional splitting integrating factor technique. A variety of non-linear reaction-diffusion equations in two and three dimensions with either Dirichlet, Neumann, or 
periodic boundary conditions are solved with this scheme and shown to outperform a variety of other second-order implicit-explicit schemes. An additional performance boost is gained 
through further use of basic parallelization techniques.
 \end{abstract}

\begin{keyword}
Exponential time differencing \sep real distinct pole \sep dimensional splitting \sep reaction-diffusion systems \sep matrix exponential


\MSC 65M12 \sep 65M15 \sep 65M20 \sep 65F60
\end{keyword}

\end{frontmatter}


\section{Introduction}
Many applications, for example in biology \cites{Murray2002,Murray2003,Mueller2015}, pattern formation \cite{Kondo2010}, or medicine \cites{Holmes21994,Sherrat1990} 
can be modelled with a system of $s$ time-dependent non-linear reaction-diffusion equations in $d$ dimensions given by
\begin{eqnarray}
\frac{\partial \mathbf{u}}{\partial t} = \mathbf{D}\Delta \mathbf{u} + \mathbf{F}(\mathbf{u})\,,\qquad \mathbf{x}\in\Omega\subset \mathbb{R}^d\,,\quad t\in (0,T)
\end{eqnarray}
with appropriate initial and boundary condition for the domain $\Omega$ and the time interval $(0,T)$. The numerical computation of such systems in two or three dimensions is challenging due to 
its high dimensionality, coupling, stiffness of the reaction and diffusion terms, and since the function $\mathbf{F}$ may be non-linear.
Additionally, spurious oscillations in the approximate solution might occur if the initial data are non-smooth or are incompatible with the boundary data.

Many algorithms have been proposed to solve such systems after discretization in space. 
For example, linearly implicit methods \cites{Akrivis1999,Vigo2007,Wade2001}, semi-implicit methods \cite{Chen1998}, projection methods \cite{Gear2003},  
stabilized explicit Runge-Kutta methods such as ROCK2 and ROCK4 \cites{rock2,rock4}, PIROCK \cite{abdullepirock}, SERK, SERK2V2, and SERK2V3 \cites{serk2,serkv2,multi,serk2v3}, its extrapolated variants \cites{eserk4,eserk5,eserk6}, 
explicit Runge-Kutta-Chebyshev (RKC) \cite{rkc}, implicit-explicit Runge-Kutta-Chebyshev (IRKC) \cite{irkc} (one of many implicit-explicit methods (IMEX) \cites{Hundsdorfer,ruuth}), 
factorized Runge-Kutta-Chebyshev (FRKC) or 
Runge-Kutta with Gegenbauer polynomials \cites{Sul15,Sul19}, integration factor methods \cites{wang,Zhao} in combination with Krylov subspace methods \cites{chen2011,donglu}, 
and exponential propagation iterative methods of Runge-Kutta type (EPIRK) \cite{tokman} 
and its variants such as IMEXP \cite{luan}, to mention a few.

One important family to solve the problem at hand is exponential time differencing (ETD), which uses the Duhamel principle and solves the resulting initial value problem by approximating the integral 
appropriately \cites{Wade2009,Wade2012}.
A class of ETD schemes based on Runge-Kutta methods has been introduced by Cox and Matthews \cite{Cox2002}. Since then, many researchers have considered this scheme 
(see for example \cites{Du2005,Hoch12005,hoch2005,minchev,Wade2009,kassamtrefethen}). However, the challenge of how to effectively approximate the matrix exponentials remains.
Kleefeld, Khaliq, and Wade \cites{Kleefeld2012,etdoption} considered an ETD Crank-Nicolson (ETD-CN) scheme; that is, the matrix exponential has been approximated through a second-order Pad\'e-$(1,1)$ approximation. 
Their fully discrete scheme is of second-order and highly 
efficient, but it is not $L$-stable. Other schemes use Krylov subspace methods to approximate the matrix exponential \cite{bhatt2018} or use higher order Pad\'e approximations \cite{vigo2007a}.

To speed up computations and to decrease memory demand, various dimensional splitting techniques such as, Strang simple and Strang symmetric splitting, as well as an 
integrating factor method have been considered by Asante-Asamani and Wade \cite{asante2} for a variety of examples with Dirichlet and Neumann boundary conditions in two dimensions
as a competitive alternative to the locally one-dimensional splitting of Bhatt and Khaliq \cite{bhatt}. It has been observed empirically that the integrating factor approach outperforms other splitting techniques.

An extension of the work for ETD-CN with Pad\'e-$(1,1)$ is given by Yousuf, Khaliq, and Kleefeld \cite{Yousuf2012} using a Pad\'e-$(0,2)$ approximation of the matrix exponential leading to a second-order $L$-stable scheme.
However, one has to use complex arithmetic in the implementation due to the two complex-valued poles in the Pad\'e approximation. 
To avoid this, one can use a rational (non-Pad\'e) approximation of second-order with real distinct poles (see \cite{Khaliq1994}), which has recently been applied by Asante-Asamani, Khaliq and Wade \cite{asante} to 
the second-order ETD scheme, named hereafter ETD-RDP. It is shown that ETD-RDP is a competitive alternative to BDF2 \cite{Vigo2007} and ETD-Pad\'e-$(0,2)$ for a variety of
examples with Dirichlet and Neumann boundary conditions in two dimensions. 
Further, it has been remarked by the authors that the ETD-RDP algorithm is easily parallelizable.

\section*{Contribution}
In this work, we apply a splitting technique using an integrating factor (IF) approach to the second-order $L$-stable ETD-RDP scheme, lifting the derivation from two dimensions to $d$ dimensions, which we 
call ETD-RDP-IF. We show that besides Dirichlet and Neumann boundary conditions, also the periodic boundary condition case can be handled. 
Further, the algorithm can deal with non-smooth boundary conditions as well as with mismatching boundary conditions.
Additionally, we 
explain how to solve the sparse linear systems in higher dimensions efficiently using an extension of the Thomas algorithm for Dirichlet and Neumann boundary conditions and 
using the Fourier transformation for periodic boundary conditions. Further, we describe how to implement a parallelization strategy in Fortran, and show that we developed a second-order $L$-stable method that is able to outperform other second-order IMEX schemes as well as ETD-RDP.

\section*{Organization of the paper}
In Section \ref{section2}, the mathematical model of a system consisting of $s$ non-linear reaction-diffusion equations is detailed.
Section \ref{section3} illustrates how this system is discretized in $d$-dimensional space. The dimensional splitting using the integrating factor approach is given in 
Section \ref{section4}. The discretization with respect to time using exponential time differencing is described in Section \ref{section5}. In Section \ref{section6}, we develop the approximation of the matrix exponential. 
The implementation and parallelization of the fully discrete ETD-RDP-IF scheme is given in Section \ref{section7}. Extensive numerical results in two and three dimensions with Dirichlet, Neumann, and periodic boundary conditions 
are given in Section \ref{section8}, which show the second-order accuracy and the efficiency of the new ETD-RDP-IF scheme. A short summary and a conclusion together with an outlook for possible future research is given in 
Section \ref{section9}.

\section{Time-dependent non-linear reaction-diffusion equation}\label{section2}
The mathematical model of a system of $s$ time-dependent non-linear reaction-diffusion equations in $d$ dimensions is given as
\begin{eqnarray}
\frac{\partial \mathbf{u}}{\partial t} = \mathbf{D}\Delta \mathbf{u} + \mathbf{F}(\mathbf{u})\,,\qquad \mathbf{x}\in\Omega\subset \mathbb{R}^d\,,\quad t\in (0,T)
\label{model}
\end{eqnarray}
where the vector-valued concentration function $\mathbf{u}=\mathbf{u}(\mathbf{x},t)=(u_1(\mathbf{x},t),\ldots,u_s(\mathbf{x},t))^\top$ depends on the spatial variable $\mathbf{x}=(x_1,\ldots,x_d)\in \Omega$ 
and the temporal variable $t\in (0,T)$. Here,
$\Omega\subset\mathbb{R}^d$ is a bounded domain with Lipschitz continuous boundary. Each component of the vector-valued function $\mathbf{F}(\mathbf{u})=(f_1(\mathbf{x}),\ldots,f_s(\mathbf{x}))^\top$ is assumed to
be a sufficiently smooth and bounded function which may be non-linear. The diagonal matrix $\mathbf{D}=\mathrm{diag}(D_1,\ldots,D_s)\in \mathbb{R}^{s\times s}$ contains 
given constant diffusion coefficients and $\Delta \mathbf{u}$ is the $d$-dimensional Laplacian taken component-wise. The boundary conditions on $\partial \Omega$ are either homogeneous Dirichlet $\mathbf{u}(\cdotp,t)=0$, 
homogeneous Neumann $\frac{\partial}{\partial \nu}\mathbf{u}(\cdotp,t)=0$ where $\nu$ denotes the exterior normal of $\Omega$, or periodic boundary conditions. The initial condition is given by
$\mathbf{u}(\mathbf{x},0)=\mathbf{u}_0(\mathbf{x})$ for $\mathbf{x}\in \Omega$.
\section{Discretizing in space}\label{section3}
As a first step, we discretize the spatial domain assuming an equidistant grid size $h$ along all $d$ directions. The parameter $p$ denotes the number of spatial grid points along each direction. 
Then, the second-order derivatives within the expression $-\mathbf{D}\Delta \mathbf{u}$ are discretized by centered differences with 
second-order accuracy. As a result, one obtains for (\ref{model}) the system of non-linear ordinary differential equations
\begin{eqnarray}
 \frac{\partial \mathbf{U}}{\partial t} +\mathbf{A}\mathbf{U} = \mathbf{f}(\mathbf{U})\,,\quad \mathbf{U}(0)=\mathbf{U}_0
 \label{start2}
\end{eqnarray}
where $\mathbf{U}$ is a vector of size $m=s\cdotp p^d$, $\mathbf{A}$ is a matrix of size $s\cdotp p^d$ times $s\cdotp p^d$, $\mathbf{f}(\mathbf{U})$ is a 
vector-valued function of size $s\cdotp p^d$ ($\mathbf{F}(\mathbf{u})$ evaluated at the $p^d$ spatial points), and the initial condition 
$\mathbf{U}_0$ is a vector of size $s\cdotp p^d$ ($\mathbf{u}_0(\mathbf{x})$ evaluated at the $p^d$ spatial points). 
The entries of the matrix $\mathbf{A}$ depend on the particular boundary condition. Three cases are given in the following two examples.
\begin{example}\label{ex1} The one-dimensional approximation of the $-\Delta \mathbf{u}$ is given by
\begin{eqnarray*}
\mathbf{B}_p &=&-\frac{1}{h^2}\left(\begin{array}{rrrrr}
-2 & 1 &  & &\\
1 & -2 & 1 & &\\
  & \ddots & \ddots & \ddots &\\
   && 1 & -2 & 1\\
  &&  & 1 & -2 \end{array}\right)  \in \mathbb{R}^{p\times p}\,, \qquad  h = \frac{1}{p+1}\,,\\
\mathbf{B}_p &=&- \frac{1}{h^2}\left(\begin{array}{rrrrr}
-2 & 2 &  & &\\
1 & -2 & 1 & &\\
  & \ddots & \ddots & \ddots&\\
  && 1 & -2 & 1\\
  &&  & 2 & -2 \end{array}\right)  \in \mathbb{R}^{p\times p}\,, \qquad h = \frac{1}{p-1}\,,\\
\mathbf{B}_p &=&- \frac{1}{h^2}\left(\begin{array}{rrrrr}
-2 & 1 &   & &1\\
1 & -2 & 1 & &\\
 &\ddots & \ddots & \ddots&\\
  && 1 & -2 & 1\\
 1& &   & 1 & -2  \end{array}\right) \in \mathbb{R}^{p\times p}\,, \qquad h = \frac{1}{p}\,,
\end{eqnarray*}
for homogeneous Dirichlet, homogeneous Neumann, and periodic boundary conditions, respectively. Hence, in one dimension $\mathbf{A}$ in (\ref{start2}) approximating $-\mathbf{D}\Delta \mathbf{u}$ is given by $\mathbf{A}=\mathbf{A}_1$ with 
$\mathbf{A}_1=\mathbf{B}_p\otimes \mathbf{D}\in\mathbb{R}^{(s\cdotp p)\times (s\cdotp p)}$. 
Here, $\otimes$ denotes the Kronecker product (see \cite{loan} for the definition and its properties).
\end{example}

\begin{example}\label{ex2}
In two dimensions, $\mathbf{A}$ in (\ref{start2}) is given by $\mathbf{A}=\mathbf{A}_1+\mathbf{A}_2$ with 
\begin{eqnarray*}
 \mathbf{A}_1&=&\mathbf{I}_p\otimes \mathbf{B}_p\otimes \mathbf{D}\in\mathbb{R}^{(s\cdotp p^2)\times (s\cdotp p^2)}\,,\\
 \mathbf{A}_2&=&\mathbf{B}_p\otimes \mathbf{I}_p\otimes \mathbf{D}\in\mathbb{R}^{(s\cdotp p^2)\times (s\cdotp p^2)}\,,
\end{eqnarray*}
where $\mathbf{I}_p$ denotes the identity matrix of size $p$
(see also pp. 1345--1346 in \cite{asante2} for the Dirichlet and Neumann case). 
The extension to three dimensions reads $\mathbf{A}=\mathbf{A}_1+\mathbf{A}_2+\mathbf{A}_3$ with 
\begin{eqnarray}
 \mathbf{A}_1&=&\mathbf{I}_p\otimes\mathbf{I}_p\otimes \mathbf{B}_p\otimes \mathbf{D}\in\mathbb{R}^{(s\cdotp p^3)\times (s\cdotp p^3)}\,,\nonumber\\
 \mathbf{A}_2&=&\mathbf{I}_p\otimes\mathbf{B}_p\otimes \mathbf{I}_p\otimes \mathbf{D}\in\mathbb{R}^{(s\cdotp p^3)\times (s\cdotp p^3)}\,,\nonumber\\
 \mathbf{A}_3&=&\mathbf{B}_p\otimes \mathbf{I}_p\otimes \mathbf{I}_p\otimes\mathbf{D}\in\mathbb{R}^{(s\cdotp p^3)\times (s\cdotp p^3)}\,.
 \label{3dcase}
\end{eqnarray}
Note that the extension to four dimensions or even 
higher follows an obvious pattern. Precisely, we have $\mathbf{A}=\sum_{i=1}^d\mathbf{A}_i$ with 
\begin{eqnarray}
\mathbf{A}_i=\underbrace{\mathbf{I}_p\otimes \cdots\otimes \mathbf{I}_p}_{(d-i) \text{times}}\otimes\underbrace{\mathbf{B}_p}_{\text{pos } d-i+1} 
\otimes\underbrace{\mathbf{I}_p \otimes  \cdots \otimes \mathbf{I}_p}_{(i-1) \text{times}}\otimes\mathbf{D}\in\mathbb{R}^{(s\cdotp p^d)\times (s\cdotp p^d)}\,.
\label{Ai}
\end{eqnarray}
\end{example}
\section{Dimensional splitting}\label{section4}
 In this section, we explain how to apply a dimensional splitting technique to solve (\ref{start2}). Precisely, we here use $\mathbf{A}=\mathbf{A}_1+\mathbf{A}_2+\mathbf{A}_3$.
 Consider a new time-dependent function of the form $\mathbf{V}=\mathrm{e}^{\mathbf{A}_1 t}\mathbf{U}$ (here the term $\mathrm{e}^{\mathbf{A}_1 t}$ is called an integrating factor).
 The matrix exponential is described in \cite{moler}. Then, 
 the derivative with respect to time $t$ of 
 $\mathbf{V}=\mathrm{e}^{\mathbf{A}_1 t}\mathbf{U}$ is given by 
 \begin{eqnarray}
 \mathbf{V}_t=\mathrm{e}^{\mathbf{A}_1 t}\mathbf{U}_t+\mathbf{A}_1\mathrm{e}^{\mathbf{A}_1 t}\mathbf{U}
 \label{step1}
 \end{eqnarray}
 recalling that $\mathbf{U}$ depends on $t$ as well. Inserting (\ref{start2}) into (\ref{step1}) yields
 \begin{eqnarray*}
  \mathbf{V}_t&=&\mathrm{e}^{\mathbf{A}_1 t}\left(\mathbf{f}(\mathbf{U})- \mathbf{A}\mathbf{U}\right)+\mathbf{A}_1\mathrm{e}^{\mathbf{A}_1 t}\mathbf{U}\\
  &=& \mathrm{e}^{\mathbf{A}_1 t}\mathbf{f}(\mathbf{U})-\mathrm{e}^{\mathbf{A}_1 t}\mathbf{A}\mathbf{U}+\mathbf{A}_1\mathrm{e}^{\mathbf{A}_1 t}\mathbf{U}\\
  &=& \mathrm{e}^{\mathbf{A}_1 t}\mathbf{f}(\mathbf{U})-\mathbf{A}\mathrm{e}^{\mathbf{A}_1 t}\mathbf{U}+\mathbf{A}_1\mathrm{e}^{\mathbf{A}_1 t}\mathbf{U}\\
  &=& \mathrm{e}^{\mathbf{A}_1 t}\mathbf{f}(\mathbf{U})-\left(\mathbf{A}_2+\mathbf{A}_3\right)\mathrm{e}^{\mathbf{A}_1 t}\mathbf{U}\\
  &=& \mathrm{e}^{\mathbf{A}_1 t}\mathbf{f}(\mathrm{e}^{-\mathbf{A}_1 t}\mathbf{V})-\left(\mathbf{A}_2+\mathbf{A}_3\right)\mathbf{V}
 \end{eqnarray*}
 where we used in the third step that $\mathbf{A}$ and $\mathbf{A}_1$ commute, 
 hence it follows that $\mathbf{A}$ and $\mathrm{e}^{\mathbf{A}_1 t}$ commute. This is proved below. Hence, we obtain
 \begin{eqnarray}
 \mathbf{V}_t+\left(\mathbf{A}_2+\mathbf{A}_3\right)\mathbf{V}=\mathbf{g}(\mathbf{V})\,,\quad \mathbf{V}(0)=\mathbf{U}_0
 \label{new}
 \end{eqnarray}
 with $\mathbf{g}(\mathbf{V})=\mathrm{e}^{\mathbf{A}_1 t}\mathbf{f}(\mathrm{e}^{-\mathbf{A}_1 t}\mathbf{V})$ and the initial condition $\mathbf{V}(0)=\mathrm{e}^{\mathbf{A}_1 0}\mathbf{U}_0=\mathbf{I}_m\mathbf{U}_0=\mathbf{U}_0$ 
 with $m=s\cdotp p^3$. Similarly, we now define $\mathbf{W}=\mathrm{e}^{\mathbf{A}_2 t}\mathbf{V}$, compute $\mathbf{W}_t$, insert $\mathbf{V}_t$ from (\ref{new}) into $\mathbf{W}_t$, and use the fact that $\mathrm{e}^{\mathbf{A}_2}$ 
 commutes with $\mathbf{A}_2+\mathbf{A}_3$ 
 which is true since $\mathbf{A}_2$ and $\mathbf{A}_2+\mathbf{A}_3$ commute as shown below. We obtain
 \begin{eqnarray}
 \mathbf{W}_t+\mathbf{A}_3\mathbf{W}=\mathbf{h}(\mathbf{W})\,,\quad \mathbf{W}(0)=\mathbf{U}_0\,,
 \label{new2}
 \end{eqnarray}
 where $\mathbf{h}(\mathbf{W})=\mathrm{e}^{\mathbf{A}_2 t}\mathbf{g}(\mathrm{e}^{-\mathbf{A}_2 t}\mathbf{W})$.
Once $\mathbf{W}(t)$ is found, we can obtain $\mathbf{V}(t)$ using the formula $\mathbf{V}(t)=\mathrm{e}^{-\mathbf{A}_2 t}\mathbf{W}(t)$ and $\mathbf{U}(t)$ via 
$\mathbf{U}(t)=\mathrm{e}^{-\mathbf{A}_1 t}\mathbf{V}(t)$. Hence, the original three-dimensional problem is converted to a problem involving only one-dimensional components.
\begin{remark}
The solution of (\ref{start2}) with $\mathbf{A}=\sum_{i=1}^d\mathbf{A}_i$ where $\mathbf{A}_i$ is given by (\ref{Ai}) can be obtained via splitting  through
 \begin{eqnarray*}
  \mathbf{Z}_t+\mathbf{A}_d\mathbf{Z}=\mathbf{k}(\mathbf{Z})\,,\quad \mathbf{Z}(0)=\mathbf{U}_0\,,
 \end{eqnarray*}
 where $\mathbf{k}(\mathbf{Z})=\left(\mathrm{e}^{\mathbf{A}_{d-1} t}\bullet\ldots\bullet\mathrm{e}^{\mathbf{A}_1 t}\right)\mathbf{F}( \left(\mathrm{e}^{-\mathbf{A}_{1} t}\bullet\ldots\bullet\mathrm{e}^{-\mathbf{A}_{d-1} t}\right)\mathbf{Z})$,
 where $\bullet$ denotes matrix multiplication (whenever needed for clarity). 
 Then, the function $\mathbf{U}(t)$ is given by the expression
 $\mathbf{U}(t)=\left(\mathrm{e}^{-\mathbf{A}_{1} t}\bullet\ldots\bullet\mathrm{e}^{-\mathbf{A}_{d-1} t}\right)\mathbf{Z}(t)$ where it is assumed that the operators commute accordingly. We will see later that this is the case for our examples.
\end{remark}
All that is left to show is that $\mathbf{A}$ and $\mathbf{A}_1$ commute as well as that $\mathbf{A}_2$ and $\mathbf{A}_2+\mathbf{A}_3$ commute for the three-dimensional case.
\begin{lemma}
Let $\mathbf{A}=\mathbf{A}_1+\mathbf{A}_2+\mathbf{A}_3$, where $\mathbf{A}_i$ is defined in (\ref{3dcase}). Then
 $\mathbf{A}$ and $\mathbf{A}_1$ commute. Also $\mathbf{A}_2$ and $\mathbf{A}_2+\mathbf{A}_3$ commute.
\end{lemma}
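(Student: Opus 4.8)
The plan is to reduce both commutation claims to the pairwise commutativity of the three summands $\mathbf{A}_1,\mathbf{A}_2,\mathbf{A}_3$, and then to verify each pairwise identity by a single application of the mixed-product property of the Kronecker product. Since $\mathbf{A}_1$ commutes with itself, expanding $\mathbf{A}\mathbf{A}_1=(\mathbf{A}_1+\mathbf{A}_2+\mathbf{A}_3)\mathbf{A}_1$ and $\mathbf{A}_1\mathbf{A}=\mathbf{A}_1(\mathbf{A}_1+\mathbf{A}_2+\mathbf{A}_3)$ shows that $\mathbf{A}$ and $\mathbf{A}_1$ commute as soon as $\mathbf{A}_1\mathbf{A}_2=\mathbf{A}_2\mathbf{A}_1$ and $\mathbf{A}_1\mathbf{A}_3=\mathbf{A}_3\mathbf{A}_1$. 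Likewise, because $\mathbf{A}_2$ commutes with itself, $\mathbf{A}_2$ and $\mathbf{A}_2+\mathbf{A}_3$ commute precisely when $\mathbf{A}_2\mathbf{A}_3=\mathbf{A}_3\mathbf{A}_2$. Thus both statements follow from the single fact that the summands commute pairwise.

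First I would recall the mixed-product rule: for matrices of compatible sizes one has $(\mathbf{M}_1\otimes\mathbf{M}_2\otimes\mathbf{M}_3\otimes\mathbf{M}_4)(\mathbf{N}_1\otimes\mathbf{N}_2\otimes\mathbf{N}_3\otimes\mathbf{N}_4)=(\mathbf{M}_1\mathbf{N}_1)\otimes(\mathbf{M}_2\mathbf{N}_2)\otimes(\mathbf{M}_3\mathbf{N}_3)\otimes(\mathbf{M}_4\mathbf{N}_4)$, a standard property of the Kronecker product (see \cite{loan}). Applying it to $\mathbf{A}_1\mathbf{A}_2$ and to $\mathbf{A}_2\mathbf{A}_1$ gives, slot by slot, $\mathbf{I}_p\otimes(\mathbf{I}_p\mathbf{B}_p)\otimes(\mathbf{B}_p\mathbf{I}_p)\otimes\mathbf{D}^2$ and $\mathbf{I}_p\otimes(\mathbf{B}_p\mathbf{I}_p)\otimes(\mathbf{I}_p\mathbf{B}_p)\otimes\mathbf{D}^2$, respectively. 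The key observation is that in each of the first three tensor slots at least one of the two factors is the identity $\mathbf{I}_p$, so the slot-wise products coincide, while in the fourth slot both products equal $\mathbf{D}^2$. Hence $\mathbf{A}_1\mathbf{A}_2=\mathbf{I}_p\otimes\mathbf{B}_p\otimes\mathbf{B}_p\otimes\mathbf{D}^2=\mathbf{A}_2\mathbf{A}_1$.

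The remaining pairs are handled identically. Since $\mathbf{A}_1,\mathbf{A}_2,\mathbf{A}_3$ place the factor $\mathbf{B}_p$ in three distinct tensor positions (and always carry $\mathbf{D}$ in the last slot), every slot-wise product in $\mathbf{A}_i\mathbf{A}_j$ involves an identity factor, so both orders collapse to the same Kronecker product; e.g. $\mathbf{A}_1\mathbf{A}_3=\mathbf{A}_3\mathbf{A}_1=\mathbf{B}_p\otimes\mathbf{I}_p\otimes\mathbf{B}_p\otimes\mathbf{D}^2$ and $\mathbf{A}_2\mathbf{A}_3=\mathbf{A}_3\mathbf{A}_2=\mathbf{B}_p\otimes\mathbf{B}_p\otimes\mathbf{I}_p\otimes\mathbf{D}^2$. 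Combining these three pairwise identities with the reductions above yields both assertions. I do not anticipate a genuine obstacle here; the only thing to watch is the bookkeeping of which slot carries $\mathbf{B}_p$ in each $\mathbf{A}_i$, together with the fact that the diffusion factor $\mathbf{D}$ sits in the same final slot for all summands, so the $\mathbf{D}$-factors never interact with a $\mathbf{B}_p$ across a product.
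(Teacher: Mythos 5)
Your proposal is correct and follows essentially the same route as the paper: reduce both claims to the three pairwise identities $\mathbf{A}_1\mathbf{A}_2=\mathbf{A}_2\mathbf{A}_1$, $\mathbf{A}_1\mathbf{A}_3=\mathbf{A}_3\mathbf{A}_1$, $\mathbf{A}_2\mathbf{A}_3=\mathbf{A}_3\mathbf{A}_2$, then verify each by the mixed-product property of the Kronecker product, with both orders collapsing to the same product (e.g. $\mathbf{I}_p\otimes\mathbf{B}_p\otimes\mathbf{B}_p\otimes\mathbf{D}^2$). Your only addition is spelling out the reduction step that the paper compresses into ``it suffices to show,'' which is fine but not a different argument.
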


\begin{proof}
 It suffices to show that $\mathbf{A}_2\mathbf{A}_1=\mathbf{A}_1\mathbf{A}_2$, $\mathbf{A}_1\mathbf{A}_3=\mathbf{A}_3\mathbf{A}_1$, and $\mathbf{A}_2\mathbf{A}_3=\mathbf{A}_3\mathbf{A}_2$.
 Using the product property of the Kronecker product, we obtain
 \begin{align*}
\mathbf{A}_1\mathbf{A}_2 &= (\mathbf{I}_p \otimes \mathbf{I}_p\otimes \mathbf{B}_p \otimes \mathbf{D} )( \mathbf{I}_p \otimes \mathbf{B}_p\otimes \mathbf{I}_p \otimes \mathbf{D})\\
       &=\mathbf{I}_p\mathbf{I}_p \otimes \mathbf{I}_p\mathbf{B}_p\otimes \mathbf{B}_p\mathbf{I}_p \otimes \mathbf{D}^2 \\
       &= \mathbf{I}_p \otimes \mathbf{B}_p\otimes \mathbf{B}_p \otimes \mathbf{D}^2
 \end{align*}      
 and similarly
 \begin{align*}
\mathbf{A}_2\mathbf{A}_1 &= ( \mathbf{I}_p \otimes \mathbf{B}_p\otimes \mathbf{I}_p \otimes \mathbf{D})(\mathbf{I}_p \otimes \mathbf{I}_p\otimes \mathbf{B}_p \otimes \mathbf{D} )\\
       &= \mathbf{I}_p\mathbf{I}_p \otimes \mathbf{I}_p\mathbf{B}_p\otimes \mathbf{B}_p\mathbf{I}_p \otimes \mathbf{D}^2\\
       &= \mathbf{I}_p \otimes \mathbf{B}_p\otimes \mathbf{B}_p \otimes \mathbf{D}^2\,.
\end{align*}
We also have
\begin{align*}
\mathbf{A}_1\mathbf{A}_3 &= (\mathbf{I}_p \otimes \mathbf{I}_p\otimes \mathbf{B}_p \otimes \mathbf{D} )( \mathbf{B}_p \otimes \mathbf{I}_p\otimes \mathbf{I}_p \otimes \mathbf{D})\\
       &= \mathbf{B}_p \otimes \mathbf{I}_p\otimes \mathbf{B}_p \otimes \mathbf{D}^2 \\
\mathbf{A}_3\mathbf{A}_1 &= ( \mathbf{B}_p \otimes \mathbf{I}_p\otimes \mathbf{I}_p \otimes \mathbf{D})(\mathbf{I}_p \otimes \mathbf{I}_p\otimes \mathbf{B}_p \otimes \mathbf{D} )\\
       &= \mathbf{B}_p \otimes \mathbf{I}_p\otimes \mathbf{B}_p \otimes \mathbf{D}^2\\
\mathbf{A}_2\mathbf{A}_3 &= (\mathbf{I}_p \otimes \mathbf{B}_p\otimes \mathbf{I}_p \otimes \mathbf{D} )( \mathbf{B}_p \otimes \mathbf{I}_p\otimes \mathbf{I}_p \otimes \mathbf{D})\\
       &= \mathbf{B}_p \otimes \mathbf{B}_p\otimes \mathbf{I}_p \otimes \mathbf{D}^2\\
\mathbf{A}_3\mathbf{A}_2 &= ( \mathbf{B}_p \otimes \mathbf{I}_p\otimes \mathbf{I}_p \otimes \mathbf{D})(\mathbf{I}_p \otimes \mathbf{B}_p\otimes \mathbf{I}_p \otimes \mathbf{D} )\\
       &= \mathbf{B}_p \otimes \mathbf{B}_p\otimes \mathbf{I}_p \otimes \mathbf{D}^2\,.
       \end{align*}
\end{proof}
\begin{lemma}\label{lemma2}
 For any $i,j\in \{1,2\ldots,d\}$, the matrices $\mathbf{A}_i$ commute with $\mathbf{A}_j$, where they are given in (\ref{Ai}).
\end{lemma}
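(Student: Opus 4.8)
The plan is to reduce the claim to a single application of the mixed-product (product) property of the Kronecker product, exactly as in the preceding three-dimensional lemma, but carried out for arbitrary $d$ and an arbitrary index pair. First I would dispose of the trivial case $i=j$, where $\mathbf{A}_i$ plainly commutes with itself, and then, using the symmetry of the commutation relation, assume without loss of generality that $i<j$. Under this assumption the two distinguished positions carrying $\mathbf{B}_p$ in the two products, namely the slot $d-i+1$ in $\mathbf{A}_i$ and the slot $d-j+1$ in $\mathbf{A}_j$, are distinct; this structural fact is what makes the argument go through.

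Next I would expand each product factor by factor. By the $n$-fold mixed-product property $(\mathbf{M}_1\otimes\cdots\otimes \mathbf{M}_n)(\mathbf{N}_1\otimes\cdots\otimes \mathbf{N}_n)=(\mathbf{M}_1\mathbf{N}_1)\otimes\cdots\otimes(\mathbf{M}_n\mathbf{N}_n)$, the product $\mathbf{A}_i\mathbf{A}_j$ is computed slot by slot: at slot $d-i+1$ one gets $\mathbf{B}_p\mathbf{I}_p=\mathbf{B}_p$, at slot $d-j+1$ one gets $\mathbf{I}_p\mathbf{B}_p=\mathbf{B}_p$, at every other one of the first $d$ slots $\mathbf{I}_p\mathbf{I}_p=\mathbf{I}_p$, and in the trailing slot $\mathbf{D}\mathbf{D}=\mathbf{D}^2$. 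Performing the identical computation for $\mathbf{A}_j\mathbf{A}_i$ gives $\mathbf{I}_p\mathbf{B}_p=\mathbf{B}_p$ in slot $d-j+1$, $\mathbf{B}_p\mathbf{I}_p=\mathbf{B}_p$ in slot $d-i+1$, identities in the remaining slots, and again $\mathbf{D}^2$ at the end. Both products thus equal the same Kronecker expression, carrying $\mathbf{B}_p$ in exactly the two slots $d-i+1$ and $d-j+1$, $\mathbf{I}_p$ in all other slots, and $\mathbf{D}^2$ as the final factor, whence $\mathbf{A}_i\mathbf{A}_j=\mathbf{A}_j\mathbf{A}_i$.

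The point to watch — rather than a genuine obstacle — is purely bookkeeping: one must track the slot indices carefully and confirm that the lone $\mathbf{B}_p$ of either factor always meets an $\mathbf{I}_p$ of the other (guaranteed precisely by $i\neq j$), so that no product of two copies of $\mathbf{B}_p$ in a common slot ever arises. Since the diagonal matrix $\mathbf{D}$ commutes with itself, the trailing factor contributes $\mathbf{D}^2$ in either order and plays no role in the argument. If a fully formal write-up were wanted, one could alternatively phrase this as an induction on $d$, or invoke the general principle that Kronecker products whose non-identity factors occupy disjoint slots always commute, but the direct slot-by-slot evaluation above is the most transparent route and mirrors the explicit $d=3$ computation already carried out.
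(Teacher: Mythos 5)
Your proposal is correct and follows essentially the same route as the paper: both apply the mixed-product property of the Kronecker product slot by slot and observe that $\mathbf{A}_i\mathbf{A}_j$ and $\mathbf{A}_j\mathbf{A}_i$ reduce to the identical Kronecker expression with $\mathbf{B}_p$ in the two distinguished positions, identities elsewhere, and $\mathbf{D}^2$ in the trailing slot. The only cosmetic difference is that the paper keeps both orderings and writes the positions via $\min(i,j)$ and $\max(i,j)$, whereas you normalize to $i<j$ by symmetry; the substance is the same.
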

\begin{proof}
 We have on the one hand
 \begin{align*}
&\mathbf{A}_i\mathbf{A}_j\\
&=\left(\underbrace{\mathbf{I}_p\otimes \cdots\otimes \mathbf{I}_p}_{(d-i) \text{times}}\otimes\underbrace{\mathbf{B}_p}_{\text{pos } d-i+1}
\otimes\underbrace{\mathbf{I}_p \otimes  \cdots \otimes \mathbf{I}_p}_{(i-1) \text{times}}\otimes\mathbf{D}\right)\\
&\bullet\left(\underbrace{\mathbf{I}_p\otimes \cdots\otimes \mathbf{I}_p}_{(d-j) \text{times}}\otimes\underbrace{\mathbf{B}_p}_{\text{pos } d-j+1}
\otimes\underbrace{\mathbf{I}_p \otimes  \cdots \otimes \mathbf{I}_p}_{(j-1) \text{times}}\otimes\mathbf{D}\right)\\
&=\mathbf{I}_p \otimes  \cdots \otimes \mathbf{I}_p \otimes \underbrace{\mathbf{B}_p}_{\text{pos } d-\min(i,j)+1}\otimes \mathbf{I}_p \otimes  \cdots \otimes \mathbf{I}_p \otimes 
\underbrace{\mathbf{B}_p}_{\text{pos } d-\max(i,j)+1}\otimes \mathbf{I}_p \otimes  \cdots \otimes \mathbf{I}_p \otimes \mathbf{D}^2\\
&=\mathbf{I}_p \otimes  \cdots \otimes \mathbf{I}_p \otimes \underbrace{\mathbf{B}_p}_{\text{pos } d-\min(j,i)+1}\otimes \mathbf{I}_p \otimes  \cdots \otimes \mathbf{I}_p \otimes 
\underbrace{\mathbf{B}_p}_{\text{pos } d-\max(j,i)+1}\otimes \mathbf{I}_p \otimes  \cdots \otimes \mathbf{I}_p \otimes \mathbf{D}^2
 \end{align*}
 and on the other hand
 \begin{align*}
&\mathbf{A}_j\mathbf{A}_i\\
&=\left(\underbrace{\mathbf{I}_p\otimes \cdots\otimes \mathbf{I}_p}_{(d-j) \text{times}}\otimes\underbrace{\mathbf{B}_p}_{\text{pos } d-j+1}
\otimes\underbrace{\mathbf{I}_p \otimes  \cdots \otimes \mathbf{I}_p}_{(j-1) \text{times}}\otimes\mathbf{D}\right)\\
&\bullet\left(\underbrace{\mathbf{I}_p\otimes \cdots\otimes \mathbf{I}_p}_{(d-i) \text{times}}\otimes\underbrace{\mathbf{B}_p}_{\text{pos } d-i+1}
\otimes\underbrace{\mathbf{I}_p \otimes  \cdots \otimes \mathbf{I}_p}_{(i-1) \text{times}}\otimes\mathbf{D}\right)\\
&=\mathbf{I}_p \otimes  \cdots \otimes \mathbf{I}_p \otimes \underbrace{\mathbf{B}_p}_{\text{pos } d-\min(j,i)+1}\otimes \mathbf{I}_p \otimes  \cdots \otimes \mathbf{I}_p \otimes 
\underbrace{\mathbf{B}_p}_{\text{pos } d-\max(j,i)+1}\otimes \mathbf{I}_p \otimes  \cdots \otimes \mathbf{I}_p \otimes \mathbf{D}^2
 \end{align*}
 which finishes the proof.
\end{proof}
\section{Discretization in time}\label{section5}
 What remains is to discretize (\ref{new2}) with respect to time. We now focus on the three-dimensional case, but the results easily extend to higher dimensions. The case for 
 two or even one-dimension is then a special case of the results for the three-dimensional case. 
Consider solving (\ref{new2}) using Duhamel's principle (see Theorem 2.5.4 in \cite{Zheng2004}); that is
\begin{equation*}
\mathbf{W}(t)= \mathrm{e}^{-\mathbf{A}_3 t}\mathbf{W}(0) + \int_0^t \mathrm{e}^{-\mathbf{A}_3(t-s)}\mathbf{h}(\mathbf{W}(\tau))\;\mathrm{d}s
\end{equation*}
which satisfies the semi-discrete equation in the interval $[t_n,t_{n+1}]$ with $k=t_{n+1}-t_n$ the positive time step. Thus,
\begin{equation*}
\mathbf{W}(t_{n+1})= \mathrm{e}^{-k\mathbf{A}_3}\mathbf{W}(t_n) + \int_{t_n}^{t_{n+1}} \mathrm{e}^{-\mathbf{A}_3(t_{n+1}-s)} \mathbf{h}(\mathbf{W}(s))\;\mathrm{d}s\,.
\end{equation*}
and, upon change of variables $s = t_n + k\tau$ for $\tau \in [0,1]$, we have
\begin{equation}
\label{duhamel1a} 
\mathbf{W}(t_{n+1})= \mathrm{e}^{-k\mathbf{A}_3}\mathbf{W}(t_n) + k\int_0^1 \mathrm{e}^{-k\mathbf{A}_3(1-\tau)}\mathbf{h}(\mathbf{W}(t_n+k\tau))\;\mathrm{d}\tau\,.
\end{equation}
Set $\widehat{\mathbf{h}}(\tau) = \mathbf{h}(\mathbf{W}(t_n+k\tau))$ and note that the linear approximation of the non-linear function $\widehat{\mathbf{h}}(\tau)$ 
is given by $\widehat{\mathbf{h}}(0)+(\widehat{\mathbf{h}}(1)-\widehat{\mathbf{h}}(0))\tau$ 
for $\tau \in [0,1].$ Substituting this linear approximation into (\ref{duhamel1a}) yields
\begin{equation}
\label{duhamel2a}
\mathbf{W}(t_{n+1}) = \mathrm{e}^{-k\mathbf{A}_3}\mathbf{W}(t_n) +k\int_0^1 \mathrm{e}^{-k\mathbf{A}_3(1-\tau)}\;\mathrm{d}\tau \widehat{\mathbf{h}}(0)  + k\int_0^1 \mathrm{e}^{-k\mathbf{A}_3(1-\tau)}\tau\;\mathrm{d}\tau (\widehat{\mathbf{h}}(1)-\widehat{\mathbf{h}}(0)).
\end{equation}
Now, we rewrite the integrals in (\ref{duhamel2a}) using the following lemma.
\begin{lemma}\label{integrals1a}
Let $\mathbf{A}$ be an arbitrary non-singular square matrix, then
\begin{eqnarray}
 k\int_0^1 \mathrm{e}^{-k\mathbf{A}(1-\tau)}\;\mathrm{d}\tau &=& \mathbf{A}^{-1}(\mathbf{I}-\mathrm{e}^{-k\mathbf{A}})\,,\label{firstpart}\\
 k\int_0^1 \mathrm{e}^{-k\mathbf{A}(1-\tau)}\tau\;\mathrm{d}\tau &=& k^{-1}\mathbf{A}^{-2}(k\mathbf{A} - \mathbf{I}+ \mathrm{e}^{-k\mathbf{A}})\label{secondpart}\,.
\end{eqnarray}
\end{lemma}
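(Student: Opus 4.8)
The plan is to treat both identities as matrix-valued integrals and evaluate them exactly by the fundamental theorem of calculus, exploiting the fact that every object appearing is a function of the single matrix $\mathbf{A}$ and therefore commutes with $\mathbf{A}$, with $\mathbf{A}^{-1}$ (which exists precisely because $\mathbf{A}$ is non-singular), and with $\mathrm{e}^{-k\mathbf{A}\sigma}$ for every scalar $\sigma$. Since the integrals are defined entry-wise and the integrands are analytic functions of $\mathbf{A}$, all the usual scalar manipulations --- substitution, antidifferentiation, integration by parts --- carry over verbatim; the only caveat is that factors of $\mathbf{A}^{-1}$ must not be transported across objects they fail to commute with, and here commutativity holds throughout, so no such difficulty arises.

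For \Eqref{firstpart} I would first substitute $\sigma = 1-\tau$ to rewrite the left-hand side as $k\int_0^1 \mathrm{e}^{-k\mathbf{A}\sigma}\,\mathrm{d}\sigma$. Because $\frac{\mathrm{d}}{\mathrm{d}\sigma}\bigl(-\tfrac{1}{k}\mathbf{A}^{-1}\mathrm{e}^{-k\mathbf{A}\sigma}\bigr) = \mathrm{e}^{-k\mathbf{A}\sigma}$ --- using that $\mathbf{A}^{-1}$ commutes with $\mathbf{A}$ and hence with the exponential --- evaluating this antiderivative at the endpoints $\sigma=0$ and $\sigma=1$ yields $k\bigl(\tfrac{1}{k}\mathbf{A}^{-1} - \tfrac{1}{k}\mathbf{A}^{-1}\mathrm{e}^{-k\mathbf{A}}\bigr) = \mathbf{A}^{-1}(\mathbf{I} - \mathrm{e}^{-k\mathbf{A}})$, which is the claim.

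For \Eqref{secondpart} I would integrate by parts, taking $u = \tau$ and $\mathrm{d}v = \mathrm{e}^{-k\mathbf{A}(1-\tau)}\,\mathrm{d}\tau$, so that $v = \tfrac{1}{k}\mathbf{A}^{-1}\mathrm{e}^{-k\mathbf{A}(1-\tau)}$ (the differentiation of $v$ again relying only on the commutativity just noted). The boundary term $[\,\tau v\,]_0^1$ survives only at $\tau=1$, contributing $\tfrac{1}{k}\mathbf{A}^{-1}$, while the leftover integral reproduces, up to a factor $\tfrac{1}{k}\mathbf{A}^{-1}$, the integral already treated in \Eqref{firstpart}. Substituting the value $\mathbf{A}^{-1}(\mathbf{I}-\mathrm{e}^{-k\mathbf{A}})$ and keeping track of the overall factor $k$, I obtain $\mathbf{A}^{-1} - \tfrac{1}{k}\mathbf{A}^{-2}(\mathbf{I} - \mathrm{e}^{-k\mathbf{A}})$; a final algebraic rearrangement --- writing $\mathbf{A}^{-1} = k^{-1}\mathbf{A}^{-2}(k\mathbf{A})$ and factoring out $k^{-1}\mathbf{A}^{-2}$ --- recasts this as $k^{-1}\mathbf{A}^{-2}(k\mathbf{A} - \mathbf{I} + \mathrm{e}^{-k\mathbf{A}})$, matching the stated right-hand side.

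There is no genuine obstacle here; the argument is the scalar computation with some bookkeeping. The only point I would state explicitly is the commutativity that legitimizes each step: since $\mathbf{A}^{-1}$ and $\mathrm{e}^{-k\mathbf{A}\sigma}$ are both functions of $\mathbf{A}$, they commute, so the antiderivatives written above are valid and the factors of $\mathbf{A}^{-1}$ and $\mathbf{A}^{-2}$ may be collected on either side without ambiguity. Non-singularity of $\mathbf{A}$ is used exactly to guarantee that $\mathbf{A}^{-1}$ and $\mathbf{A}^{-2}$ exist.
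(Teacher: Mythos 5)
Your proof is correct and takes essentially the same approach as the paper's: both identities are evaluated by exhibiting explicit matrix antiderivatives (legitimized by the commutativity of $\mathbf{A}^{-1}$ and $\mathrm{e}^{-k\mathbf{A}\sigma}$ as functions of $\mathbf{A}$) and applying the fundamental theorem of calculus, with an integration-by-parts step for the $\tau$-weighted integral. The only cosmetic difference is that you integrate by parts directly in $\tau$ and reuse \Eqref{firstpart}, whereas the paper first substitutes $s=(1-\tau)k$, splits the integral in two, and then applies the same product-rule antiderivative trick.
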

\begin{proof}
We have
\begin{eqnarray*}
\frac{\mathrm{d}}{\mathrm{d}\tau}\left(\mathbf{A}^{-1}\mathrm{e}^{-(1-\tau)k\mathbf{A}}\right) = k\mathbf{A}\mathbf{A}^{-1}\mathrm{e}^{-(1-\tau)k\mathbf{A}}= k\mathrm{e}^{-(1-\tau)k\mathbf{A}}
\end{eqnarray*}
and therefore
\begin{eqnarray*}
k\int_0^1 \mathrm{e}^{-k\mathbf{A}(1-\tau)}\;\mathrm{d}\tau = \int_0^{1} \frac{\mathrm{d}}{\mathrm{d}\tau}\left(\mathbf{A}^{-1}
\mathrm{e}^{-(1-\tau)k\mathbf{A}}\;\mathrm{d}\tau \right)
&=& \mathbf{A}^{-1}(\mathbf{I}-\mathrm{e}^{-k\mathbf{A}}).
\end{eqnarray*} 
To show equation (\ref{secondpart}), we use the change of variable $s = (1-\tau)k$, which yields
\begin{eqnarray}
 k\int_0^1 \mathrm{e}^{-k\mathbf{A}(1-\tau)}\tau\;\mathrm{d}\tau = \int_0^k \mathrm{e}^{-s\mathbf{A}}(1-k^{-1}s)\;\mathrm{d}s
= \int_0^k \mathrm{e}^{-s\mathbf{A}}\;\mathrm{d}s - k^{-1}\int_0^k \mathrm{e}^{-s\mathbf{A}}s\;\mathrm{d}s\,.
\label{step}
\end{eqnarray} 
The first integral of the right-hand side of (\ref{step}) can be transformed to
\begin{align*}
\int_0^k \mathrm{e}^{-s\mathbf{A}}\;\mathrm{d}s = -\int_0^k \frac{\mathrm{d}}{\mathrm{d}s}(\mathbf{A}^{-1}\mathrm{e}^{-s\mathbf{A}})\;\mathrm{d}s = \mathbf{A}^{-1}(\mathbf{I}-\mathrm{e}^{-k\mathbf{A}})
\end{align*}
using 
\[\frac{\mathrm{d}}{\mathrm{d}s}(\mathbf{A}^{-1}\mathrm{e}^{-s\mathbf{A}}) = -\mathbf{A}^{-1}\mathbf{A}\mathrm{e}^{-s\mathbf{A}} = -\mathrm{e}^{-s\mathbf{A}}\,.\]
We have
\[\frac{\mathrm{d}}{\mathrm{d}s}(k^{-1}s\mathbf{A}^{-1}\mathrm{e}^{-s\mathbf{A}}) = k^{-1}\mathbf{A}^{-1}\mathrm{e}^{-s\mathbf{A}}-k^{-1}\mathrm{e}^{-s\mathbf{A}}s\]
and hence, we obtain for the second integral of the right-hand side of (\ref{step})
\begin{eqnarray*}
k^{-1}\int_{0}^k \mathrm{e}^{-s\mathbf{A}}s\;\mathrm{d}s &=& k^{-1}\int_0^k \mathbf{A}^{-1}\mathrm{e}^{-s\mathbf{A}}\;\mathrm{d}s - \int_0^k \frac{\mathrm{d}}{\mathrm{d}s}(k^{-1}s\mathbf{A}^{-1}\mathrm{e}^{-s\mathbf{A}})\;\mathrm{d}s\\
&=& k^{-1}\mathbf{A}^{-2}\left(\mathbf{I}-\mathrm{e}^{-k\mathbf{A}}\right)-\mathbf{A}^{-1}\mathrm{e}^{-k\mathbf{A}}\\
&=& k^{-1}\mathbf{A}^{-2}\left(\mathbf{I}-\mathrm{e}^{-k\mathbf{A}}-k\mathbf{A}\mathrm{e}^{-k\mathbf{A}}\right)\,.
\end{eqnarray*}
Thus, equation (\ref{step}) can be written as
\begin{eqnarray*}
k\int_0^1 \mathrm{e}^{-k\mathbf{A}(1-\tau)}\tau\;\mathrm{d}\tau &=& \mathbf{A}^{-1}(\mathbf{I}-\mathrm{e}^{-k\mathbf{A}})-k^{-1}\mathbf{A}^{-2}
\left(\mathbf{I}-\mathrm{e}^{-k\mathbf{A}}-k\mathbf{A}\mathrm{e}^{-k\mathbf{A}}\right)\\
&=&k^{-1}\mathbf{A}^{-2}\left(k\mathbf{A} -\mathbf{I}+\mathrm{e}^{-k\mathbf{A}}\right)\,.
\end{eqnarray*}
\end{proof}
 Applying the results of Lemma \ref{integrals1a} to (\ref{duhamel2a}), we obtain
\begin{eqnarray}
\label{duhamel3}
\mathbf{W}(t_{n+1}) &=& \mathrm{e}^{-k\mathbf{A}_3}\mathbf{W}(t_n) + \mathbf{A}_3^{-1}(\mathbf{I}_m-\mathrm{e}^{-k\mathbf{A}_3})\mathbf{h}(\mathbf{W}(t_n)) \nonumber\\
&+& k^{-1}\mathbf{A}_3^{-2}(k\mathbf{A}_3 - \mathbf{I}_m+ 
\mathrm{e}^{-k\mathbf{A}_3})(\mathbf{h}(\mathbf{W}(t_{n+1}))-\mathbf{h}(\mathbf{W}(t_n)))
\end{eqnarray}
which we write as
\begin{eqnarray}
\label{duhamel3z}
\mathbf{W}_{n+1} &=& \mathrm{e}^{-k\mathbf{A}_3}\mathbf{W}_n + \mathbf{A}_3^{-1}(\mathbf{I}_m-\mathrm{e}^{-k\mathbf{A}_3})\mathbf{h}(\mathbf{W}_n) \nonumber\\
&+& k^{-1}\mathbf{A}_3^{-2}(k\mathbf{A}_3 - \mathbf{I}_m+ 
\mathrm{e}^{-k\mathbf{A}_3})(\mathbf{h}(\mathbf{W}_{n+1})-\mathbf{h}(\mathbf{W}_n))\,.
\end{eqnarray}
Note, however that the current scheme (\ref{duhamel3z}) is fully implicit and hence one would need to employ Newton-type solvers. We therefore use the (easy to derive) first order approximation of (\ref{duhamel3z}) given by 
\begin{eqnarray}
\label{duhamel3t}
\mathbf{W}_{n+1}^\ast &=& \mathrm{e}^{-k\mathbf{A}_3}\mathbf{W}_n + \mathbf{A}_3^{-1}(\mathbf{I}_m-\mathrm{e}^{-k\mathbf{A}_3})\mathbf{h}(\mathbf{W}_n)
\end{eqnarray}
known as ETD1 (exponential time differencing of order one which is the same as the IMEX backward Euler method).
Next, we set $\mathbf{h}(\mathbf{W}_{n+1})=\mathbf{h}(\mathbf{W}_{n+1}^\ast)$. Hence, our second-order semi-discrete exponential time differencing (ETD) scheme is (see also p. 25 in \cite{asante})
\begin{eqnarray}
\label{duhamel3u}
\mathbf{W}_{n+1} &=& \mathrm{e}^{-k\mathbf{A}_3}\mathbf{W}_n + \mathbf{A}_3^{-1}(\mathbf{I}_m-\mathrm{e}^{-k\mathbf{A}_3})\mathbf{h}(\mathbf{W}_n) \nonumber\\
&+& k^{-1}\mathbf{A}_3^{-2}(k\mathbf{A}_3 - \mathbf{I}_m+ 
\mathrm{e}^{-k\mathbf{A}_3})(\mathbf{h}(\mathbf{W}_{n+1}^\ast)-\mathbf{h}(\mathbf{W}_n))\nonumber\\
\mathbf{W}_{n+1}^\ast &=& \mathrm{e}^{-k\mathbf{A}_3}\mathbf{W}_n + \mathbf{A}_3^{-1}(\mathbf{I}_m-\mathrm{e}^{-k\mathbf{A}_3})\mathbf{h}(\mathbf{W}_n)\,.
\end{eqnarray}
Finally, the unwinding back to $\mathbf{U}$ is needed. However, we explain this later, since we first concentrate on the approximation of the matrix exponential.

\section{Approximating the matrix exponential}\label{section6}
The final step is to approximate the matrix exponential. There are several ways to achieve this (see \cites{Kleefeld2012,Yousuf2012,moler1978,moler}). However, we focus on the second-order $L$-acceptable rational 
approximation with simple real distinct poles (RDP) originally proposed in 
\cite{Khaliq1994} and used in \cites{asantethesis,asante} to derive the ETD-RDP scheme. A second-order approximation of the matrix exponential using rational approximation \cite{asante}*{p. 26} 
is given by 
\begin{eqnarray}
 \mathbf{R}_{\mathrm{RDP}}(k\mathbf{A_3})=\left(\mathbf{I}_m-\frac{5k}{12}\mathbf{A}_3\right)\left[\left(\mathbf{I}_m+\frac{k}{3}\mathbf{A}_3\right)\left(\mathbf{I}_m+\frac{k}{4}\mathbf{A}_3\right)\right]^{-1}\approx\mathrm{e}^{-k\mathbf{A_3}}
 \label{padenonlinear}
\end{eqnarray}
and the Pad\'e-$(0,1)$ first-order approximation is given by
\begin{eqnarray}
 \mathbf{R}_{\mathrm{01}}(k\mathbf{A_3})=\left(\mathbf{I}_m+k\mathbf{A}_3\right)^{-1}\approx\mathrm{e}^{-k\mathbf{A_3}}\,.
 \label{pade01}
\end{eqnarray}
Using (\ref{padenonlinear}) and (\ref{pade01}) for (\ref{duhamel3u}) yields
\begin{eqnarray}
\label{duhamel4u}
\mathbf{W}_{n+1} &=&  \mathbf{R}_{\mathrm{RDP}}(k\mathbf{A_3})\mathbf{W}_n + \mathbf{A}_3^{-1}(\mathbf{I}_m- \mathbf{R}_{\mathrm{RDP}}(k\mathbf{A_3}))\mathbf{h}(\mathbf{W}_n) \nonumber\\
&+& k^{-1}\mathbf{A}_3^{-2}(k\mathbf{A}_3 - \mathbf{I}_m+ 
 \mathbf{R}_{\mathrm{RDP}}(k\mathbf{A_3}))(\mathbf{h}(\mathbf{W}_{n+1}^\ast)-\mathbf{h}(\mathbf{W}_n))\nonumber\\
\mathbf{W}_{n+1}^\ast &=& \mathbf{R}_{\mathrm{01}}(k\mathbf{A_3})\mathbf{W}_n + \mathbf{A}_3^{-1}(\mathbf{I}_m-\mathbf{R}_{\mathrm{01}}(k\mathbf{A_3}))\mathbf{h}(\mathbf{W}_n)\,.
\end{eqnarray}
The first-order predictor $\mathbf{W}_{n+1}^\ast$ can be simplified to
\begin{eqnarray*}
 \mathbf{W}_{n+1}^\ast&=& \mathbf{R}_{\mathrm{01}}(k\mathbf{A_3})\mathbf{W}_n+\mathbf{A}_3^{-1}\left(\mathbf{R}_{\mathrm{01}}(k\mathbf{A_3})^{-1}\mathbf{R}_{\mathrm{01}}(k\mathbf{A_3})-\mathbf{R}_{\mathrm{01}}(k\mathbf{A_3})\right)\mathbf{h}(\mathbf{W}_n)\\
 &=&\mathbf{R}_{\mathrm{01}}(k\mathbf{A_3})\mathbf{W}_n+\mathbf{A}_3^{-1}(\mathbf{I}_m+k\mathbf{A}_3-\mathbf{I}_m)\mathbf{R}_{\mathrm{01}}(k\mathbf{A_3})\mathbf{h}(\mathbf{W}_n)\\
 &=&\mathbf{R}_{\mathrm{01}}(k\mathbf{A_3})\left(\mathbf{W}_n+k \mathbf{h}(\mathbf{W}_n)\right)\\
 &=&\left(\mathbf{I}_m+k\mathbf{A}_3\right)^{-1}\left(\mathbf{W}_n+k\mathbf{h}(\mathbf{W}_n)\right)\,.
\end{eqnarray*}
Next, we focus on the corrector $\mathbf{W}_{n+1}$. We have
\begin{eqnarray*}
 & &\mathbf{A}_3^{-1}(\mathbf{I}_m- \mathbf{R}_{\mathrm{RDP}}(k\mathbf{A_3}))\\
 &=&\mathbf{A}_3^{-1}\left(\mathbf{I}_m- \left(\mathbf{I}_m-\frac{5k}{12}\mathbf{A}_3\right)\left[\left(\mathbf{I}_m+\frac{k}{3}\mathbf{A}_3\right)\left(\mathbf{I}_m+\frac{k}{4}\mathbf{A}_3\right)\right]^{-1}\right)\\
 &=&\mathbf{A}_3^{-1}\left(\left(\mathbf{I}_m+\frac{k}{3}\mathbf{A}_3\right)\left(\mathbf{I}_m+\frac{k}{4}\mathbf{A}_3\right)- \left(\mathbf{I}_m-\frac{5k}{12}\mathbf{A}_3\right)\right) \\
 &\bullet&\left(\mathbf{I}_m+\frac{k}{4}\mathbf{A}_3\right)^{-1}\left(\mathbf{I}_m+\frac{k}{3}\mathbf{A}_3\right)^{-1}\\
 &=& \mathbf{A}_3^{-1}\left(k\mathbf{A}_3 +\frac{k^2}{12}\mathbf{A}_3^2\right) \left(\mathbf{I}_m+\frac{k}{4}\mathbf{A}_3\right)^{-1}\left(\mathbf{I}_m+\frac{k}{3}\mathbf{A}_3\right)^{-1}\\
 &=& k\left(\mathbf{I}_m +\frac{k}{12}\mathbf{A}_3\right) \left(\mathbf{I}_m+\frac{k}{4}\mathbf{A}_3\right)^{-1}\left(\mathbf{I}_m+\frac{k}{3}\mathbf{A}_3\right)^{-1}
\end{eqnarray*}
and likewise 
\begin{eqnarray*}
 & & k^{-1}\mathbf{A}_3^{-2}(k\mathbf{A}_3 - \mathbf{I}_m+ \mathbf{R}_{\mathrm{RDP}}(k\mathbf{A_3}))\\
 &=& k^{-1}\mathbf{A}_3^{-2}\left[ k \mathbf{A}_3 \left(\mathbf{I}_m+\frac{k}{3}\mathbf{A}_3\right)\left(\mathbf{I}_m+\frac{k}{4}\mathbf{A}_3\right)    -  \left(\mathbf{I}_m+\frac{k}{3}\mathbf{A}_3\right)\left(\mathbf{I}_m+\frac{k}{4}\mathbf{A}_3\right)\right.\\
 &+&\left. \left(\mathbf{I}_m-\frac{5k}{12}\mathbf{A}_3\right)\right] \left(\mathbf{I}_m+\frac{k}{4}\mathbf{A}_3\right)^{-1}\left(\mathbf{I}_m+\frac{k}{3}\mathbf{A}_3\right)^{-1}\\
 &=&k^{-1}\mathbf{A}_3^{-2}\left[\frac{6k^2}{12}\mathbf{A}_3^2+\frac{k^3}{12}\mathbf{A}_3^3\right] \left(\mathbf{I}_m+\frac{k}{4}\mathbf{A}_3\right)^{-1}\left(\mathbf{I}_m+\frac{k}{3}\mathbf{A}_3\right)^{-1}\\
 &=&\frac{k}{2}\left(\mathbf{I}_m+\frac{k}{6}\mathbf{A}_3\right) \left(\mathbf{I}_m+\frac{k}{4}\mathbf{A}_3\right)^{-1}\left(\mathbf{I}_m+\frac{k}{3}\mathbf{A}_3\right)^{-1}\,.
\end{eqnarray*}
The corrector can now be written as
\begin{eqnarray*}
\mathbf{W}_{n+1} &=& \left(\mathbf{I}_m-\frac{5k}{12}\mathbf{A}_3\right)\left(\mathbf{I}_m+\frac{k}{4}\mathbf{A}_3\right)^{-1}\left(\mathbf{I}_m+\frac{k}{3}\mathbf{A}_3\right)^{-1} \mathbf{W}_n \\
&+& \frac{k}{2}\left(\mathbf{I}_m+\frac{k}{4}\mathbf{A}_3\right)^{-1}\left(\mathbf{I}_m+\frac{k}{3}\mathbf{A}_3\right)^{-1} \mathbf{h}(\mathbf{W}_n) \\
&+& \frac{k}{2}\left(\mathbf{I}_m+\frac{k}{6}\mathbf{A}_3\right) \left(\mathbf{I}_m+\frac{k}{4}\mathbf{A}_3\right)^{-1}\left(\mathbf{I}_m+\frac{k}{3}\mathbf{A}_3\right)^{-1}\mathbf{h}(\mathbf{W}_{n+1}^\ast)\,.
\end{eqnarray*}
These three terms can be nicely rewritten using the following three partial fraction decompositions (cf. \cite{asantethesis}*{p. 32} for the details)
\begin{eqnarray*}
\left(\mathbf{I}_m-\frac{5k}{12}\mathbf{A}_3\right)\mathbf{S}_{m,k}\mathbf{T}_{m,k}
&=&9\mathbf{T}_{m,k}-8\mathbf{S}_{m,k}\\
\mathbf{S}_{m,k}\mathbf{T}_{m,k}
&=&4\mathbf{T}_{m,k}-3\mathbf{S}_{m,k}\\
 \left(\mathbf{I}_m+\frac{k}{6}\mathbf{A}_3\right) \mathbf{S}_{m,k}\mathbf{T}_{m,k}
&=&2\mathbf{T}_{m,k}-1 \mathbf{S}_{m,k}\,,
\end{eqnarray*}
where we used the abbreviations
\begin{eqnarray*}
 \mathbf{S}_{m,k}&=&\left(\mathbf{I}_m+\frac{k}{4}\mathbf{A}_3\right)^{-1}\\
 \mathbf{T}_{m,k}&=&\left(\mathbf{I}_m+\frac{k}{3}\mathbf{A}_3\right)^{-1}
\end{eqnarray*}
to shorten the notation for these decompositions. We have
\begin{eqnarray*}
 \mathbf{W}_{n+1}&=&\left(9\mathbf{T}_{m,k}-8\mathbf{S}_{m,k}\right)\mathbf{W}_n+ \frac{k}{2}\left(4\mathbf{T}_{m,k}-3\mathbf{S}_{m,k}\right)\mathbf{h}(\mathbf{W}_n)\\
&+&\frac{k}{2}\left(2\mathbf{T}_{m,k}-\mathbf{S}_{m,k}\right)\mathbf{h}(\mathbf{W}_{n+1}^\ast)
\end{eqnarray*}
which can be written as
\begin{eqnarray*}
\mathbf{W}_{n+1}&=&\mathbf{T}_{m,k}\left(9\mathbf{W}_n+2k\mathbf{h}(\mathbf{W}_n)+ k\mathbf{h}(\mathbf{W}_{n+1}^\ast)\right)\\
&-&\mathbf{S}_{m,k}\left(8\mathbf{W}_n+\frac{3k}{2}\mathbf{h}(\mathbf{W}_n)+\frac{k}{2}\mathbf{h}(\mathbf{W}_{n+1}^\ast)\right)
 \end{eqnarray*}
Hence, the fully discrete ETD-RDP scheme is given by
\begin{eqnarray}
\mathbf{W}_{n+1} &=& \left(\mathbf{I}_m+\frac{k}{3}\mathbf{A}_3\right)^{-1}\left[9\mathbf{W}_n+2k\mathbf{h}(\mathbf{W}_n)+k \mathbf{h}(\mathbf{W}_{n+1}^\ast)  \right]\nonumber\\
&-& \left(\mathbf{I}_m+\frac{k}{4}\mathbf{A}_3\right)^{-1}\left[8\mathbf{W}_n+\frac{3k}{2}\mathbf{h}(\mathbf{W}_n)+\frac{k}{2} \mathbf{h}(\mathbf{W}_{n+1}^\ast)  \right]\nonumber\\
\mathbf{W}_{n+1}^\ast&=&\left(\mathbf{I}_m+k\mathbf{A}_3\right)^{-1}\left(\mathbf{W}_n+k\mathbf{h}(\mathbf{W}_n)\right)\,.
\label{finalle}
\end{eqnarray}
Note that the stability regions of this second-order $L$-stable scheme 
(see \cite{asantethesis}*{Theorems 4.0.2--4.0.4} for the detailed proof of second order accuracy and \cite{asante}*{p. 26} for the $L$-stability, respectively) are given in 
\cite{asantethesis}*{Figure 2.2} and compared against ETD-CN, ETD-Pad\'e-$(0,2)$, and implicit-explicit Adams-Moulton/Bashforth scheme.

Now, we substitute $\mathbf{W}=\mathrm{e}^{\mathbf{A}_2 t}\mathbf{V}$ and $\mathbf{h}(\mathbf{W})=\mathrm{e}^{\mathbf{A}_2 t}\mathbf{g}(\mathbf{V})$, which means we have the expressions 
$\mathbf{W}_n=\mathrm{e}^{\mathbf{A}_2nk}\mathbf{V}_n$, $\mathbf{W}_{n+1}=\mathrm{e}^{\mathbf{A}_2nk}\mathrm{e}^{\mathbf{A}_2k}\mathbf{V}_{n+1}$, $\mathbf{h}(\mathbf{W}_n)=\mathrm{e}^{\mathbf{A}_2 nk}\mathbf{g}(\mathbf{V}_n)$, 
and further
$\mathbf{h}(\mathbf{W}_{n+1})=\mathrm{e}^{\mathbf{A}_2 nk}\mathrm{e}^{\mathbf{A}_2k}\mathbf{g}(\mathbf{V}_{n+1})$.
Then, (\ref{finalle}) can be written as
\begin{eqnarray*}
\mathbf{V}_{n+1} &=& \left(\mathbf{I}_m+\frac{k}{3}\mathbf{A}_3\right)^{-1}\left[\mathrm{e}^{-\mathbf{A}_2k}\left\{9\mathbf{V}_n+2k\mathbf{h}(\mathbf{V}_n)\right\}+k \mathbf{g}(\mathbf{V}_{n+1}^\ast)  \right]\\
&-& \left(\mathbf{I}_m+\frac{k}{4}\mathbf{A}_3\right)^{-1}\left[\mathrm{e}^{-\mathbf{A}_2k}\left\{8\mathbf{V}_n+\frac{3k}{2}\mathbf{h}(\mathbf{V}_n)\right\}+\frac{k}{2} \mathbf{h}(\mathbf{V}_{n+1}^\ast)  \right]\\
\mathbf{V}_{n+1}^\ast&=&\left(\mathbf{I}_m+k\mathbf{A}_3\right)^{-1}\mathrm{e}^{-\mathbf{A}_2k}\left(\mathbf{V}_n+k\mathbf{h}(\mathbf{V}_n)\right)\,.
\end{eqnarray*}
Now, we approximate $\mathrm{e}^{-\mathbf{A}_2k}$ in the corrector with $\mathbf{R}_{\mathrm{RDP}}$ and in the predictor with $\mathbf{R}_{\mathrm{01}}$. This gives
\begin{eqnarray}
\mathbf{V}_{n+1} &=& \left(\mathbf{I}_m+\frac{k}{3}\mathbf{A}_3\right)^{-1}\left[\left\{9\left(\mathbf{I}_m+\frac{k}{3}\mathbf{A}_2\right)^{-1}-8\left(\mathbf{I}_m+\frac{k}{4}\mathbf{A}_2\right)^{-1}\right\}    \right.\nonumber\\
&\bullet&\left.\left\{9\mathbf{V}_n+2k\mathbf{g}(\mathbf{V}_n)\right\}+k \mathbf{g}(\mathbf{V}_{n+1}^\ast)  \right]\nonumber\\
&-& \left(\mathbf{I}_m+\frac{k}{4}\mathbf{A}_3\right)^{-1}\left[\left\{9\left(\mathbf{I}_m+\frac{k}{3}\mathbf{A}_2\right)^{-1}-8\left(\mathbf{I}_m+\frac{k}{4}\mathbf{A}_2\right)^{-1}\right\}                     \right.\nonumber\\
&\bullet&\left.\left\{8\mathbf{V}_n+\frac{3k}{2}\mathbf{g}(\mathbf{V}_n)\right\}+\frac{k}{2} \mathbf{g}(\mathbf{V}_{n+1}^\ast)  \right]\nonumber\\
\mathbf{V}_{n+1}^\ast&=&\left(\mathbf{I}_m+k\mathbf{A}_3\right)^{-1}\left(\mathbf{I}_m+k\mathbf{A}_2\right)^{-1}\left(\mathbf{V}_n+k\mathbf{g}(\mathbf{V}_n)\right)\,.
\label{done}
\end{eqnarray}
Next, we recall the substitution $\mathbf{V}=\mathrm{e}^{\mathbf{A}_1 t}\mathbf{U}$ and $\mathbf{g}(\mathbf{V})=\mathrm{e}^{\mathbf{A}_1 t}\mathbf{f}(\mathbf{U})$, which means we have
$\mathbf{V}_n=\mathrm{e}^{\mathbf{A}_1nk}\mathbf{U}_n$, $\mathbf{V}_{n+1}=\mathrm{e}^{\mathbf{A}_1nk}\mathrm{e}^{\mathbf{A}_1k}\mathbf{U}_{n+1}$, $\mathbf{g}(\mathbf{V}_n)=\mathrm{e}^{\mathbf{A}_1 nk}\mathbf{f}(\mathbf{U}_n)$, 
and 
$\mathbf{g}(\mathbf{V}_{n+1})=\mathrm{e}^{\mathbf{A}_1 nk}\mathrm{e}^{\mathbf{A}_1k}\mathbf{f}(\mathbf{U}_{n+1})$. We approximate
$\mathrm{e}^{-\mathbf{A}_1k}$ in the corrector with $\mathbf{R}_{\mathrm{RDP}}$ and in the predictor with $\mathbf{R}_{\mathrm{01}}$, which finally yields the fully discrete iterative scheme given in the following 
frame.

\begin{mdframed}
\textbf{ETD-RDP-IF scheme in three dimensions}\\
For $n=0,1,2,\ldots, T/k-1$ compute
\begin{eqnarray}
\mathbf{U}_{n+1} &=& \left(\mathbf{I}_m+\frac{k}{3}\mathbf{A}_3\right)^{-1}\left[\left\{9\left(\mathbf{I}_m+\frac{k}{3}\mathbf{A}_2\right)^{-1}-8\left(\mathbf{I}_m+\frac{k}{4}\mathbf{A}_2\right)^{-1}\right\}    \right.\nonumber\\
&\bullet& \left.\left\{9\left(\mathbf{I}_m+\frac{k}{3}\mathbf{A}_1\right)^{-1}-8\left(\mathbf{I}_m+\frac{k}{4}\mathbf{A}_1\right)^{-1}\right\}\left\{9\mathbf{U}_n+2k\mathbf{f}(\mathbf{U}_n)\right\}+k \mathbf{f}(\mathbf{U}_{n+1}^\ast)  \right]\nonumber\\
&-& \left(\mathbf{I}_m+\frac{k}{4}\mathbf{A}_3\right)^{-1}\left[\left\{9\left(\mathbf{I}_m+\frac{k}{3}\mathbf{A}_2\right)^{-1}-8\left(\mathbf{I}_m+\frac{k}{4}\mathbf{A}_2\right)^{-1}\right\}                     \right.\nonumber\\
&\bullet&\left.\left\{9\left(\mathbf{I}_m+\frac{k}{3}\mathbf{A}_1\right)^{-1}-8\left(\mathbf{I}_m+\frac{k}{4}\mathbf{A}_1\right)^{-1}\right\}\left\{8\mathbf{U}_n+\frac{3k}{2}\mathbf{f}(\mathbf{U}_n)\right\}+\frac{k}{2} \mathbf{f}(\mathbf{U}_{n+1}^\ast)  \right]\nonumber\\
\mathbf{U}_{n+1}^\ast&=&\left(\mathbf{I}_m+k\mathbf{A}_3\right)^{-1}\left(\mathbf{I}_m+k\mathbf{A}_2\right)^{-1}\left(\mathbf{I}_m+k\mathbf{A}_1\right)^{-1}\left(\mathbf{U}_n+k\mathbf{f}(\mathbf{U}_n)\right)\nonumber\\
\label{donedone}
\end{eqnarray}
with $\mathbf{U}_0=\mathbf{U}(0)$.
\end{mdframed}

This procedure easily extends to arbitrary dimensions $d$. We derive the fully discrete iterative ETD-RDP-IF scheme in $d$ dimensions. It is given in the following frame.
\begin{mdframed}
\textbf{ETD-RDP-IF scheme in $d$ dimensions}\\
For $n=0,1,2,\ldots, T/k-1$ compute
\begin{eqnarray*}
\mathbf{U}_{n+1} &=& \left(\mathbf{I}_m+\frac{k}{3}\mathbf{A}_d\right)^{-1}\left[\left\{9\left(\mathbf{I}_m+\frac{k}{3}\mathbf{A}_{d-1}\right)^{-1}-8\left(\mathbf{I}_m+\frac{k}{4}\mathbf{A}_{d-1}\right)^{-1}\right\}\bullet\ldots    \right.\nonumber\\
&\bullet& \left.\left\{9\left(\mathbf{I}_m+\frac{k}{3}\mathbf{A}_1\right)^{-1}-8\left(\mathbf{I}_m+\frac{k}{4}\mathbf{A}_1\right)^{-1}\right\}\left\{9\mathbf{U}_n+2k\mathbf{f}(\mathbf{U}_n)\right\}+k \mathbf{f}(\mathbf{U}_{n+1}^\ast)  \right]\nonumber\\
&-& \left(\mathbf{I}_m+\frac{k}{4}\mathbf{A}_d\right)^{-1}\left[\left\{9\left(\mathbf{I}_m+\frac{k}{3}\mathbf{A}_{d-1}\right)^{-1}-8\left(\mathbf{I}_m+\frac{k}{4}\mathbf{A}_{d-1}\right)^{-1}\right\}\bullet\ldots                     \right.\nonumber\\
&\bullet&\left.\left\{9\left(\mathbf{I}_m+\frac{k}{3}\mathbf{A}_1\right)^{-1}-8\left(\mathbf{I}_m+\frac{k}{4}\mathbf{A}_1\right)^{-1}\right\}\left\{8\mathbf{U}_n+\frac{3k}{2}\mathbf{f}(\mathbf{U}_n)\right\}+\frac{k}{2} \mathbf{f}(\mathbf{U}_{n+1}^\ast)  \right]\nonumber\\
\mathbf{U}_{n+1}^\ast&=&\left(\mathbf{I}_m+k\mathbf{A}_d\right)^{-1}\bullet\ldots\bullet\left(\mathbf{I}_m+k\mathbf{A}_1\right)^{-1}\left(\mathbf{U}_n+k\mathbf{f}(\mathbf{U}_n)\right)\,.
\end{eqnarray*}
with $\mathbf{U}_0=\mathbf{U}(0)$.
\end{mdframed}
\section{Implementation and parallelization of the fully discrete scheme}\label{section7}
An implementation of this scheme given by (\ref{donedone}) in parallel utilizing only three threads for the three-dimensional case is illustrated in the flow chart given in Figure \ref{flow3d}. One could theoretically 
also use five threads 
to speed up the computations. However, we use for the numerical results
a computer which has at most four threads as explained in the next section. 
The two-dimensional case is illustrated in Figure \ref{flow2d} and can be derived using the system given in (\ref{done}) replacing $\mathbf{W}$ with $\mathbf{U}$, $\mathbf{g}$ with $\mathbf{f}$, $\mathbf{A}_2$ with $\mathbf{A}_1$ and $\mathbf{A}_3$ 
with $\mathbf{A}_2$.
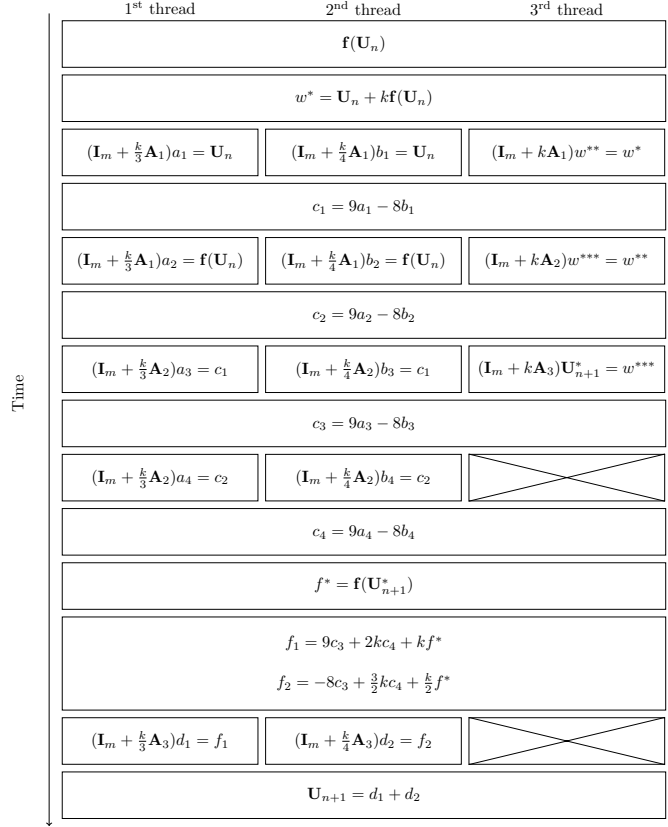
\begin{figure}[!ht]
\centering
\resizebox{9cm}{!}{%
\begin{tikzpicture}[rectangle,text centered,node distance=0.15cm,minimum height=1cm]
    \node (z01) [minimum height=0cm] {}; 
    \node (z011) [right=of z01, minimum width=\schmal cm]{1$^{\mathrm{st}}$ thread};
    \node (z012) [right=of z011,minimum width=\schmal cm]{2$^{\mathrm{nd}}$ thread};
    \node (z013) [right=of z012,minimum width=\schmal cm]{3$^{\mathrm{rd}}$ thread};
    
    \node (z02) [below=of z01] {};
    \node (z021) [draw,right=of z02, minimum width=\breite cm] 	{{$\mathbf{f}(\mathbf{U}_{n})$}};

    \node (z03) [below=of z02] {};
    \node (z031) [draw,right=of z03, minimum width=\breite cm]	{{$ w^\ast=\mathbf{U}_{n} + k\mathbf{f}(\mathbf{U}_{n}) $}};

    \node (z04) [below=of z03] {};
    \node (z041) [draw,right=of z04, minimum width=\schmal cm] 	{$ (\mathbf{I}_m+ \frac{k}{3} \mathbf{A}_1) a_{1} = \mathbf{U}_{n} $};
    \node (z042) [draw,right=of z041, minimum width=\schmal cm]	{$ (\mathbf{I}_m+ \frac{k}{4} \mathbf{A}_1) b_{1} = \mathbf{U}_{n} $};
    \node (z043) [draw,right=of z042,minimum width=\schmal cm] 	{$ (\mathbf{I}_m+ k \mathbf{A}_1) w^{\ast\ast} = w^\ast$};

    \node (z05) [below=of z04] {};
    \node (z051) [draw,right=of z05, minimum width=\breite cm]{$ c_1 = 9a_1 - 8b_1$};
    
    \node (z06) [below=of z05] {};
    \node (z061) [draw,right=of z06, minimum width=\schmal cm]{$ (\mathbf{I}_m+ \frac{k}{3} \mathbf{A}_1) a_{2} = \mathbf{f}(\mathbf{U}_{n}) $};
    \node (z062) [draw,right=of z061,minimum width=\schmal cm]{$ (\mathbf{I}_m+ \frac{k}{4} \mathbf{A}_1) b_{2} = \mathbf{f}(\mathbf{U}_{n}) $};
    \node (z063) [draw,right=of z062,minimum width=\schmal cm]{$ (\mathbf{I}_m+ k \mathbf{A}_2)w^{\ast\ast\ast} = w^{\ast\ast} $};
    
    \node (z07) [below=of z06] {};
    \node (z071) [draw,right=of z07, minimum width=\breite cm]{$ c_2 = 9a_2 - 8b_2 $};
    
    \node (z08) [below=of z07] {};
    \node (z081) [draw,right=of z08, minimum width=\schmal cm]{$ (\mathbf{I}_m+ \frac{k}{3} \mathbf{A}_2) a_{3} = c_1 $};
    \node (z082) [draw,right=of z081,minimum width=\schmal cm]{$ (\mathbf{I}_m+ \frac{k}{4} \mathbf{A}_2) b_{3} = c_1 $};
    \node (z083) [draw,right=of z082,minimum width=\schmal cm]{$ (\mathbf{I}_m+ k \mathbf{A}_3)\mathbf{U}^\ast_{n+1} = w^{\ast\ast\ast} $};
    
    \node (z09) [below=of z08] {};
    \node (z091) [draw,right=of z09,minimum width=\breite cm]{$ c_3 = 9a_3 - 8b_3 $};
    
    \node (z10) [below=of z09] {};
    \node (z101) [draw,right=of z10, minimum width=\schmal cm]{$ (\mathbf{I}_m+ \frac{k}{3} \mathbf{A}_2) a_{4} = c_2 $};
    \node (z102) [draw,right=of z101,minimum width=\schmal cm]{$ (\mathbf{I}_m+ \frac{k}{4} \mathbf{A}_2) b_{4} = c_2 $};
    \node (z103) [draw,right=of z102,minimum width=\schmal cm]{};
    \node (z103c)[draw,cross out,right=of z102,minimum width=\schmal cm]{};

    \node (z11) [below=of z10] {};
    \node (z111) [draw,right=of z11,minimum width=\breite cm]{$ c_4 = 9a_4 - 8b_4 $};

    \node (z08a) [below=of z11] {};
    \node (z081a) [draw,right=of z08a, minimum width=\breite cm]{$ f^\ast=\mathbf{f}(\mathbf{U}^\ast_{n+1}) $};
    
    \node (z12) [below=of z08a,minimum height=2cm] {};
    \node (z121) [draw,right=of z12,minimum width=\breite cm,minimum height=2cm]{\begin{tabular}{cc}
										  $ f_1 = 9 c_3 + 2 k c_4 + k f^\ast $ \\ \\ 
										  $ f_2 = -8 c_3 + \frac{3}{2} k c_4 + \frac{k}{2} f^\ast $ \\
										  \end{tabular}};
    
    \node (z13) [below=of z12] {};
    \node (z131) [draw,right=of z13, minimum width=\schmal cm]{$ (\mathbf{I}_m+ \frac{k}{3} \mathbf{A}_3) d_{1} = f_1 $};
    \node (z132) [draw,right=of z131,minimum width=\schmal cm]{$ (\mathbf{I}_m+ \frac{k}{4} \mathbf{A}_3) d_{2} = f_2 $};
    \node (z133) [draw,right=of z132,minimum width=\schmal cm]{};
    \node (z133c)[draw,cross out,right=of z132,minimum width=\schmal cm]{};
    
    \node (z14) [below=of z13] {};
    \node (z141) [draw,right=of z14,minimum width=\breite cm]{$ \mathbf{U}_{n+1} = d_1 + d_2 $};
    
    \node (z15) [below=of z14] {};

    \path [draw,->] (z01) -- (z15);
    \node (pfeil1) [left=of z08]{};
    \node (pfeil2) [left=of pfeil1,rotate=90]{Time};
  \end{tikzpicture}
  }
  \caption{\label{flow3d}Flow chart for the implementation of the ETD-RDP-IF scheme in parallel using three threads for the three dimensional case.}
\end{figure}
 
 \begin{figure}[!ht]
 \centering
\resizebox{9cm}{!}{%
 \begin{tikzpicture}[rectangle,text centered,node distance=0.15cm,minimum height=1cm]
    \node (z01) [minimum height=0cm] {}; 
    \node (z011) [right=of z01, minimum width=\schmal cm]{1$^{\mathrm{st}}$ thread};
    \node (z012) [right=of z011,minimum width=\schmal cm]{2$^{\mathrm{nd}}$ thread};
    \node (z013) [right=of z012,minimum width=\schmal cm]{3$^{\mathrm{rd}}$ thread};
    
    \node (z02) [below=of z01] {};
    \node (z021) [draw,right=of z02, minimum width=\breite cm] 	{{$\mathbf{f}(\mathbf{U}_{n})$}};

    \node (z03) [below=of z02] {};
    \node (z031) [draw,right=of z03, minimum width=\breite cm]	{{$ w^\ast=\mathbf{U}_{n} + k\mathbf{f}(\mathbf{U}_{n}) $}};

    \node (z04) [below=of z03] {};
    \node (z041) [draw,right=of z04, minimum width=\schmal cm] 	{$ (\mathbf{I}_m+ \frac{k}{3} \mathbf{A}_1) a_{1} = \mathbf{U}_{n} $};
    \node (z042) [draw,right=of z041, minimum width=\schmal cm]	{$ (\mathbf{I}_m+ \frac{k}{4} \mathbf{A}_1) b_{1} = \mathbf{U}_{n} $};
    \node (z043) [draw,right=of z042,minimum width=\schmal cm] 	{$ (\mathbf{I}_m+ k \mathbf{A}_1) w^{\ast\ast} = w^\ast $};

    \node (z05) [below=of z04] {};
    \node (z051) [draw,right=of z05, minimum width=\breite cm]{$ c_1 = 9a_1 - 8b_1$};
    
    \node (z06) [below=of z05] {};
    \node (z061) [draw,right=of z06, minimum width=\schmal cm]{$ (\mathbf{I}_m+ \frac{k}{3} \mathbf{A}_1) a_{2} = \mathbf{f}(\mathbf{U}_{n}) $};
    \node (z062) [draw,right=of z061,minimum width=\schmal cm]{$ (\mathbf{I}_m+ \frac{k}{4} \mathbf{A}_1) b_{2} = \mathbf{f}(\mathbf{U}_{n}) $};
    \node (z063) [draw,right=of z062,minimum width=\schmal cm]{$ (\mathbf{I}_m+ k \mathbf{A}_2) \mathbf{U}^\ast_{n+1}= w^{\ast\ast} $};
    
    \node (z07) [below=of z06] {};
    \node (z071) [draw,right=of z07, minimum width=\breite cm]{$ c_2 = 9a_2 - 8b_2 $};
    
    \node (z08) [below=of z07] {};
    \node (z081) [draw,right=of z08, minimum width=\breite cm]{$ f^\ast=\mathbf{f}(\mathbf{U}^\ast_{n+1}) $};
    
    \node (z09) [below=of z08,minimum height=2cm] {};
    \node (z091) [draw,right=of z09,minimum width=\breite cm,minimum height=2cm]{\begin{tabular}{cc}
								$ f_1 = 9 c_1 + 2 k c_2 + k f^\ast $ \\ \\ 
								$ f_2 = -8 c_1 - \frac{3}{2} k c_2 - \frac{k}{2} f^\ast $ \\
								\end{tabular}};
    

    \node (z11) [below=of z09] {};
    \node (z111) [draw,right=of z11, minimum width=\schmal cm]{$ (\mathbf{I}_m+ \frac{k}{3} \mathbf{A}_2) d_1 = f_1 $};
    \node (z112) [draw,right=of z111,minimum width=\schmal cm]{$ (\mathbf{I}_m+ \frac{k}{4} \mathbf{A}_2) d_2 = f_2 $};
    \node (z113) [draw,right=of z112,minimum width=\schmal cm]{};
    \node (z1132)[draw,cross out,right=of z112,minimum width=\schmal cm]{};

    \node (z12) [below=of z11] {};
    \node (z121) [draw,right=of z12, minimum width=\breite cm]{$ \mathbf{U}_{n+1} = d_1 + d_2 $};
    
    \node (z13) [below=of z12] {};
    
    \path [draw,->] (z01) -- (z13);
    \node (pfeil1) [left=of z07]{};
    \node (pfeil2) [left=of pfeil1,rotate=90]{Time};
  \end{tikzpicture}
  }
  \caption{\label{flow2d}Flow chart for the implementation of the ETD-RDP-IF scheme in parallel using three threads for the two dimensional case.}
\end{figure}

 The most time consuming part is the numerical solution of the large linear systems of the form $(\mathbf{I}_m+\frac{k}{\gamma}\mathbf{A}_i)x=b$ with $\gamma\in \{1,3,4\}$ and $\mathbf{A}_i$ given by (\ref{3dcase}) for the 
 three-dimensional case and (\ref{Ai}) for the general case. We focus our explanations directly on the matrices $\mathbf{A}_i\in \mathbb{R}^{m\times m}$ with $m=s\cdotp p^d$.
 These are sparse having only a few diagonals occupied with non-zero elements. Hence, only those diagonals have to be stored. 
 In fact, it is easy to see that the matrix $\mathbf{A}_1$ has three diagonals for both the Dirichlet and Neumann boundary condition case. 
 Precisely, it has a main diagonal (the location is zero) as well as an upper and lower diagonal located at $\pm s$ elements apart from the main diagonal, written compactly here as 
 $\mathbf{A}_1=\mathrm{sparse}(\mathbf{T},[-s,0,s])$ where the matrix $\mathbf{T}$ contains the three vectors $\ell^{(1)}=(0,\ldots,0,\ell_{s+1},\ldots,\ell_m)^\top$, 
 $d^{(1)}=(d_{1},\ldots,d_s)^\top$, and $u^{(1)}=(u_1,\ldots, u_{m-s},0,\ldots,0)^\top$ all of size $s\cdotp p^3$ padded with zeros accordingly.
 Similarly, the matrices $\mathbf{A}_2$ and $\mathbf{A}_3$ for the Dirichlet and Neumann boundary case are sparse and of the form $\mathbf{A}_2=\mathrm{sparse}(\mathbf{T},[-s\cdotp p,0,s\cdotp p])$ and 
 $\mathbf{A}_3=\mathrm{sparse}(\mathbf{T},[-s\cdotp p^2,0,s\cdotp p^2])$, respectively. For the periodic boundary condition case, we have sparse matrices with five diagonals of the form 
 $\mathbf{A}_1=\mathrm{sparse}(\mathbf{T},[-s\cdotp(p-1),-s,0,s,s\cdotp(p-1)])$ with $\mathbf{T}=[\ell^{(2)},\ell^{(1)},d^{(1)},u^{(1)},u^{(2)}]$. Similarly, we have the matrix
 $\mathbf{A}_2=\mathrm{sparse}(\mathbf{T},[-s\cdotp p(p-1),-s\cdotp p,0,s\cdotp p,s\cdotp p(p-1)])$, 
 and finally 
 $\mathbf{A}_3=\mathrm{sparse}(\mathbf{T},[-s\cdotp p^2(p-1),-s\cdotp p^2,0,s\cdotp p^2,s\cdotp p^2(p-1)])$.
 
 The band structure for the $d$-dimensional case is now obvious. For  $i\in\{1,\ldots,d\}$, we have $\mathbf{A}_i=\mathrm{sparse}(\mathbf{T},[-s\cdotp p^{i-1},0,s\cdotp p^{i-1}])$
 for the Dirichlet and Neumann boundary case and $\mathbf{A}_i=\mathrm{sparse}(\mathbf{T},[-s\cdotp p^{i-1}(p-1),-s\cdotp p^{i-1},0,s\cdotp p^{i-1},s\cdotp p^{i-1}(p-1)])$ for the periodic boundary case.
 
 A straight-forward $\mathbf{LU}$-decomposition of the matrices $\mathbf{I}_m+\frac{k}{\gamma}\mathbf{A}_1$, $\mathbf{I}_m+\frac{k}{\gamma}\mathbf{A}_2$, and $\mathbf{I}_m+\frac{k}{\gamma}\mathbf{A}_3$ without 
 pivoting gives lower and upper matrices with the same structure without any fill-ins,
 since 
 the matrices are all diagonal dominant if $k$ is small enough. 
 Hence, we can adapt the well-known Thomas algorithm to solve tridiagonal systems of the form $\mathrm{sparse}(\mathbf{T},[-1,0,1])$ 
 (see for example \cite{conte}) to derive the following algorithm in order to solve systems of the form $\mathrm{sparse}(\mathbf{T},[-w,0,w])$. 
 
 Precisely, we have to compute for the sparse matrices from above with given diagonals $\ell^{(1)}$, $d^{(1)}$, and $u^{(1)}$ with offset
 $w$ and given right-hand side $b$ the following steps. For $i=1,\ldots,w$, we compute  $\alpha_i=u_i^{(1)}/d_i^{(1)}$ and $\beta_i=b_i/d_i^{(1)}$ as well as for $i=w+1,\ldots,m-w$ we have 
 $\alpha_i=u_i^{(1)}/\{d_i^{(1)}-\alpha_{i-w}\cdotp l_i^{(1)}\}$ and for $i=w+1,\ldots,m$ we have $\beta_i=\{b_i-\beta_{i-w}\cdotp l_i^{(1)}\}/\{d_i^{(1)}-\alpha_{i-w}\cdotp l_i^{(1)}\}$. Then, we assign 
 $x_{m-i+1}=\beta_{m-i+1}$ for $i=1,\ldots, w$ and compute $x_i=\beta_i-\alpha_i\cdotp x_{i+w}$ for the decreasing $i=m-w,\ldots,1$.

 Now, we focus on the periodic boundary case. As shown above, the matrices have five diagonals. If the diagonals have the same distance from each other, one could derive the factorization explicitly, and
 use forward and backward substitution similar to the variant of the Thomas algorithm explained before. However, this is not the case here. Using an $\mathbf{LU}$-decomposition shows that a few fill-ins are generated and therefore a variant of the Thomas 
 algorithm is not applicable. One could use the well-known Sherman-Morrison-Woodbury formula (see \cite{golub}), since the periodic case in 1D with $s=1$ is a rank-2 update of the Dirichlet case. Hence, in this special situation, 
 it is possible to obtain an algorithm to compute explicitely $(\mathbf{I}_m+\frac{k}{\gamma}\mathbf{A}_1)$ using the variant of the Thomas algorithm as explained before. 
 But the extension to 2D already gives a rank-2$p$ update (that means one has to solve a linear systems with $2p$ right hand sides using the variant of the Thomas algorithm). The situation gets even more complicated considering 
 $(\mathbf{I}_m+\frac{k}{\gamma}\mathbf{A}_2)$ in 2D or in higher dimensions with $s>1$.
 
 If we consider $s=1$ and $d=2$, then it is easy to see that $(\mathbf{I}_m+\frac{k}{\gamma}\mathbf{A}_1)$ is a diagonal block matrix, where each block is a cyclic Toeplitz matrix. 
 Likewise, $(\mathbf{I}_m+\frac{k}{\gamma}\mathbf{A}_2)$ is a cyclic Toeplitz matrix. That means, we can apply the Fourier transformation to efficiently solve the linear system. Precisely, if $a$ is the first colum of a cyclic Toeplitz matrix, 
 then it holds $(\mathcal{F}_m a) \cdotp (\mathcal{F}_m x)=\mathcal{F}_m b$ and hence $x$ is given by $x=\mathcal{F}_m^{-1}\left((\mathcal{F}_m b)./(\mathcal{F}_m a)\right)$ (see for example \cite{ChenSiam}). Here, $./$ means component-wise division.
 This is faster in computation than using the variant of the Thomas algorithm after applying the Sherman-Morrison-Woodbury formula. Likewise, we have for $s=1$ and $d=3$ that $(\mathbf{I}_m+\frac{k}{\gamma}\mathbf{A}_1)$ and 
 $(\mathbf{I}_m+\frac{k}{\gamma}\mathbf{A}_2)$ are cyclic block Toeplitz matrices and $(\mathbf{I}_m+\frac{k}{\gamma}\mathbf{A}_3)$ is a cyclic Toeplitz matrix.

 \begin{remark}
  Again, we would like to stress the fact that we never store the complete matrices in memory, but instead only the elements from the three or five diagonals depending on the problem at hand. 
  The solution of the linear systems for the Dirichlet and Neumann boundary case can be directly obtained through the use of a generalization of the Thomas algorithm taking a serious advantage of the dimensional splitting altogether. 
  Hence, we can avoid to solve the sparse linear systems iteratively through Jacobi, Gau\ss{}-Seidel or Krylov methods. However, the situation is different for the periodic boundary case in the general setting. 
  For the special case $s=1$, we use the Fourier transform to efficiently solve the sparse linear system instead of applying the Sherman-Morrison-Woodbury formula and solving directly by a variant of the Thomas algorithm. 
 \end{remark}

\section{Numerical results and comparison}\label{section8}
In this section, numerical results are presented for a variety of examples both in two and three dimensions and compared with existing methods.
All numerical results are performed on a PC equipped with four Intel cores (i7-4790 CPU @ 3.60GHz) on a socket each of which can have two threads 
(architecture: x86\_64, CPU operation modes: 32-bit and 64-bit, and byte order: little endian). The machine has $32$GB of memory. We use the gfortran Fortran compiler
gcc version 7.4.0 on SUSE Linux (version 15.1) with the optimization option \texttt{-O3 -march=native} and the OpenMP (version 4.5) option \texttt{-fopen\_mp} and the 
Matlab version R2018a. More information of the mentioned compiler, software, and operation system can be obtained at the following links:
https://www.intel.de 
https://gcc.gnu.org
https://www.suse.com
https://www.openmp.org
https://de.mathworks.com
Additionally, we used the Fortran library FFTPACK5.1 for the fast Fourier transformation available under
https://people.sc.fsu.edu/$\sim$jburkardt/f77\_src/fftpack5.1/fftpack5.1.html

\subsection{Enzyme kinetics of Michaelis-Menten type}
The enzyme kinetics of Michaelis-Menten type reaction-diffusion equation in one dimension has been considered in \cite{cherruault} and since then researchers have considered it in higher 
dimensions as a testing scenario (see for example \cite{bhatt}).
It is of the form
\[\frac{\partial u}{\partial t}=\gamma\Delta u-\frac{u}{1+u}\]
where the positive constant $\gamma$ is given. The domain $\Omega$ is assumed to be a unit square and the time interval is $[0,1]$. The initial condition is prescribed to be constant one and the boundary condition is assumed to be homogeneous Dirichlet. 
Solving this problem numerically is known to be challenging due to the discontinuity of the initial and boundary conditions as it can cause spurious oscillations in the solution (cf. \cite{bhatt}).
Clearly, the problem at hand fits into the format (\ref{model}) with $s=1$, $D=\gamma$, and $f(u)=-u/(1+u)$.

First, we show in Table \ref{enztable} that the new method ETD-RDP-IF is indeed second-order as ETD-RDP without splitting (see \cite{asante} for ETD-RDP). 
We use $D=0.2$, fix $h$ quite small as $0.0125$, and 
vary $k$ from $0.05$ to $0.00625$ to compute the error $\epsilon(k)$ measured in the $L^\infty$ norm and the estimated order of convergence $\text{EOC}=\log(\epsilon(k)/\epsilon(k/2))/\log(2)$ for the final time $T=1$. Additionally, we include the CPU times.
\begin{table}[!ht]
\centering
\begin{tabular}{c|ccc|ccc}         
$k$        &  $\epsilon(k)_\text{ETD-RDP}$       &    EOC   &    CPU Time  &    $\epsilon(k)_\text{ETD-RDP-IF}$      &   EOC  &  CPU time\\
\hline
0.05000 &  $5.3337\times 10^{-3}$  &          &    0.38305   &    $7.1838\times 10^{-3}$ &        &  0.02738\\
0.02500 &  $1.5935\times 10^{-3}$  &    1.74  &    0.76822   &    $1.5829\times 10^{-3}$ &   2.18 &  0.04211\\
0.01250 &  $4.3420\times 10^{-4}$  &    1.88  &    1.51854   &    $3.6926\times 10^{-4}$ &   2.10 &  0.07334\\
0.00625 &  $1.1356\times 10^{-4}$  &    1.93  &    3.02891   &    $8.8718\times 10^{-5}$ &   2.06 &  0.13775\\
\hline
\end{tabular}
\caption{\label{enztable}Estimated order of convergence EOC for both the ETD-RDP and ETD-RDP-IF for the enzyme kinetics example using the parameters $D=0.2$, $h=0.0125$, $T=1$ within the unit square. 
Additionally, the CPU times in seconds of the Matlab program are listed.}
\end{table}
As we see in Table \ref{enztable}, we obtain a second-order convergence rate with comparable errors for both methods. 
Above all, we notice a much better performance of the ETD-RDP-IF compared to ETD-RDP. We obtain a speed-up factor of $20$.

Next, we show in Figure \ref{enzeff} the efficiency of the new method ETD-RDP-IF compared with other second-order methods such as ETD-RDP, IMEX-BDF2, IMEX-TR, and IMEX-Adams2 (all methods are implemented in Matlab and run serial). 
\begin{figure}[!ht]
\centering
\includegraphics[width=7.5cm]{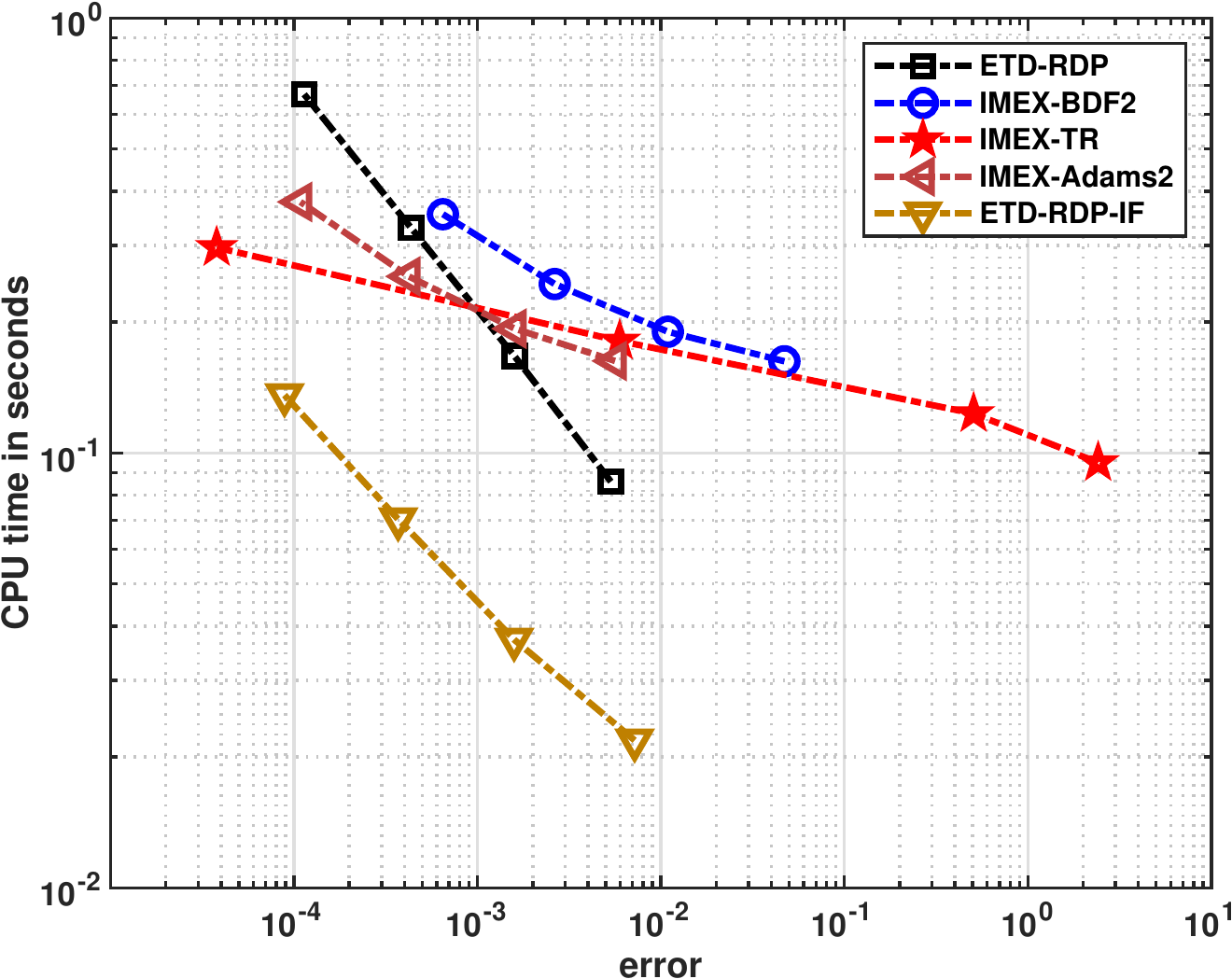}
 \caption{\label{enzeff}Log-Log efficiency plot comparing ETD-RDP and ETD-RDP-IF with different second-order IMEX schemes implemented in Matlab running serial for the enzyme kinetics example using the parameters 
 $D=0.2$ and $T=1$ within the unit square.}
\end{figure}

As we see in Figure \ref{enzeff}, the ETD-RDP method has a comparable efficiency as the different second-order IMEX schemes for this problem. However, the application of dimensional splitting outperforms the IMEX schemes. 
Hence, the ETD-RDP-IF is the most efficient method for this enzyme kinetics example.

Note also that we tried other splitting techniques for the enzyme kinetics example, such as Strang simple splitting and Strang symmetric splitting (see \cite{asantethesis}). It turns out that the 
integrating factor outperforms the other two splitting techniques (as this was also the case in \cite{asante2}).

Now, we show in Table \ref{tableenzyme} the performance of the ETD-RDP-IF method implemented in Matlab in comparison with the Fortran implementation without and with parallelization for a variety of parameter choices for $h$ and $k$ 
using the parameters $D=0.2$ and $T=1$.
\begin{table}[!ht]
\centering
\begin{tabular}{c|rrr}
 $k=h$ & Matlab & Fortran (1 thread) & Fortran (3 threads)\\
 \hline
 $1/100$ &  0.16 & 0.14 & 0.06\\
 $1/200$ &  1.20 & 1.19 & 0.48\\
 $1/400$ &  9.76 & 9.87 & 4.54\\
 $1/800$ & 83.26 & 73.80 & 35.81 \\
 \hline
 \end{tabular}
 \caption{\label{tableenzyme}CPU times in seconds for the ETD-RDP-IF method implemented in Matlab and Fortran (serial and parallelized version) for the enzyme kinetics example using the parameter 
 $D=0.2$ and $T=1$ within the unit square.}
\end{table}
Although the algorithm ETD-RDP-IF is already very efficient in serial compared to other methods, it can be greatly improved first through conversion to Fortran and 
then through parallelization. We obtain a speed-up factor of two. Note that we list the timings of the complete program, i.e. the preparation of matrices (allocation, initialization, etc.) and the timing of 
the ETD-RDP-IF algorithm. 

Note that one could further increase the performance of the Fortran program by using floating point arithmetic with single precision instead of double precision as used above. 
Using three threads and single precision arithmetic within the Fortran program, we obtain the timings
$0.04$, $0.33$, $2.76$, and $23.65$ seconds, respectively. This gives a total speed-up factor of four compared to the Matlab implementation.

\subsection{The Brusselator system}
In this section, we consider the two- and three-dimensional generalization of the one-dimensional reaction-diffusion Brusselator system. 
For more explanations regarding the well-studied model for a hypothetical tri-molecular reaction, we refer the reader to \cite{zegeling}*{p. 526} and the references cited therein for the details 
and the background of this model.
The system in two dimensions reads
\begin{eqnarray*}
 \frac{\partial u_1}{\partial t}&=&D\Delta u_1 +u_1^2u_2-(A+1)u_1+B\\
 \frac{\partial u_2}{\partial t}&=&D\Delta u_2 -u_1^2u_2+Au_1\\
\end{eqnarray*}
where the constants $D$, $A$, and $B$ are given. In our experiment, we consider the domain $\Omega=[0,1]^2$ and $t\in (0,2)$ and the parameters $D=2\times 10^{-3}$, $A=3.4$, and $B=1$ as done in \cite{asante}. 
We prescribe homogeneous Neumann boundary
conditions for both $u_1$ and $u_2$. The initial condition is given by 
$$u_1(x,y,0)=1/2+y\,\qquad u_2(x,y,0)=1+5x\,.$$
Again, we can see in Table \ref{brutable} that the estimated order of convergence agrees with the theoretical convergence order of two where we fixed $h=0.0125$. 
\begin{table}[!ht]
\centering
\begin{tabular}{c|ccc|ccc}         
$k$        &  $\epsilon(k)_\text{ETD-RDP}$       &    EOC   &    CPU Time  &    $\epsilon(k)_\text{ETD-RDP-IF}$      &   EOC  &  CPU time\\
\hline
0.1000 &  $2.3309\times 10^{-1}$  &          &    0.19429   &    $2.3852\times 10^{-1}$ &        &  0.03740\\
0.0500 &  $5.8468\times 10^{-2}$  &    2.00  &    0.38407   &    $4.8944\times 10^{-2}$ &   2.28 &  0.06368\\
0.0250 &  $1.5750\times 10^{-2}$  &    1.89  &    0.76545   &    $1.2898\times 10^{-2}$ &   1.92 &  0.12128\\
0.0125 &  $4.0884\times 10^{-3}$  &    1.95  &    1.53955   &    $3.3223\times 10^{-3}$ &   1.96 &  0.24152\\
\hline
\end{tabular}
\caption{\label{brutable}Estimated order of convergence EOC for both the ETD-RDP and ETD-RDP-IF for the Brusselator example using the parameters $D=2\times 10^{-3}$, $A=3.4$, $B=1$, $h=0.0125$, and $T=2$ 
within the unit square. Additionally, the CPU times in seconds of the Matlab program are listed.}
\end{table}
In Figure \ref{enzeff2} we also show that the ETD-RDP-IF outperforms ETD-RDP and the second-order IMEX schemes IMEX-BDF2, IMEX-TR, and IMEX-Adams2 (all methods are implemented in Matlab and run serial).
\begin{figure}[!ht]
\centering
\includegraphics[width=7.5cm]{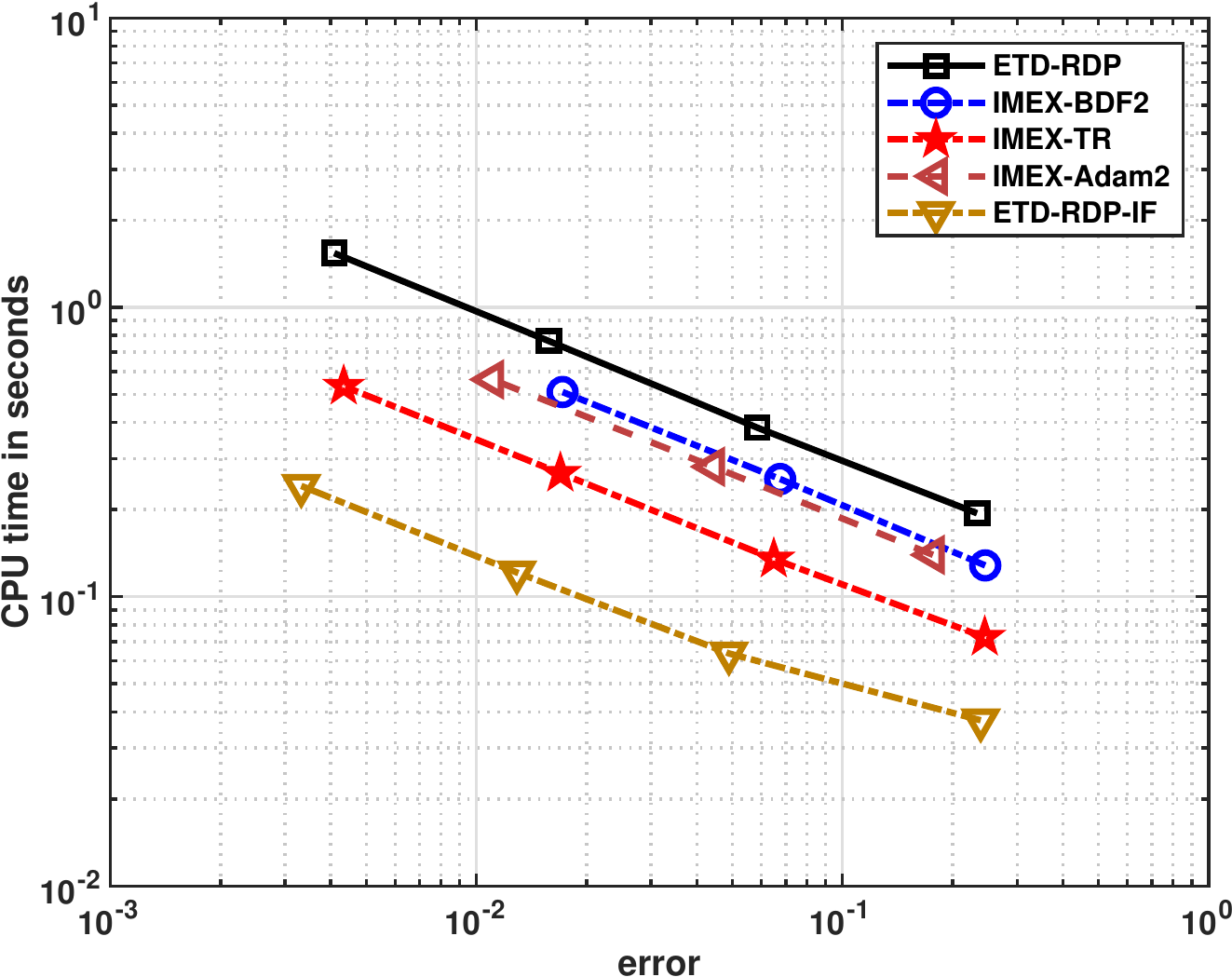}
 \caption{\label{enzeff2}Log-Log efficiency plot comparing ETD-RDP and ETD-RDP-IF with different second-order IMEX schemes implemented in Matlab running serial for the Brusselator example using the parameters 
 $D=2\times 10^{-3}$, $A=3.4$, $B=1$, and $T=2$ within the unit square.}
\end{figure}
The CPU timings can be greatly improved through the Fortran implementation and additionally through parallelization. The CPU timings are listed in Table \ref{tablebrus}.
\begin{table}[!ht]
\centering
\begin{tabular}{c|rrr}
 $k=h$ & Matlab & Fortran (1 thread) & Fortran (3 threads)\\
 \hline
 $1/100$ &  0.51 & 0.36 & 0.16\\
 $1/200$ &  3.87 & 3.32 & 1.60\\
 $1/400$ & 34.05 & 28.99 & 13.92\\
 $1/800$ & 286.06 & 190.75 & 114.19 \\
 \hline
 \end{tabular}
 \caption{\label{tablebrus}CPU times in seconds for the ETD-RDP-IF method implemented in Matlab and Fortran (serial and parallelized version) for the Brusselator example using the parameters $D=2\times 10^{-3}$, $A=3.4$, $B=1$, and $T=2$.}
\end{table}
As we can see, the serial Fortran program is faster than the Matlab program as expected. The parallelized version is almost twice as fast as the serial version.

Finally, we also consider the three-dimensional Brusselator example on the domain $\Omega=[0,1]^3$ with the same initial and boundary conditions and parameters as in test problem 4 within \cite{bhatt}. Precisely, we use homogeneous Neumann boundary conditions and as initial conditions
$u_1(x,y,z,0)=1+\sin(2\pi x)\sin(2\pi y)\sin(2\pi z)$ and $u_2(x,y,z,0)=3$. The parameters are $D=0.02$, $A=1$, $B=2$, and $T=5$. The spatial and time step are given by $h=1/10$ and $k=1/1000$.
In Figure \ref{profiles}, we show a 2D slice through the 3D domain of $u_1$ and $u_2$ for $z=1$ and the final time $T=5$. 
\begin{figure}[!ht]
\centering
\includegraphics[width=6.5cm]{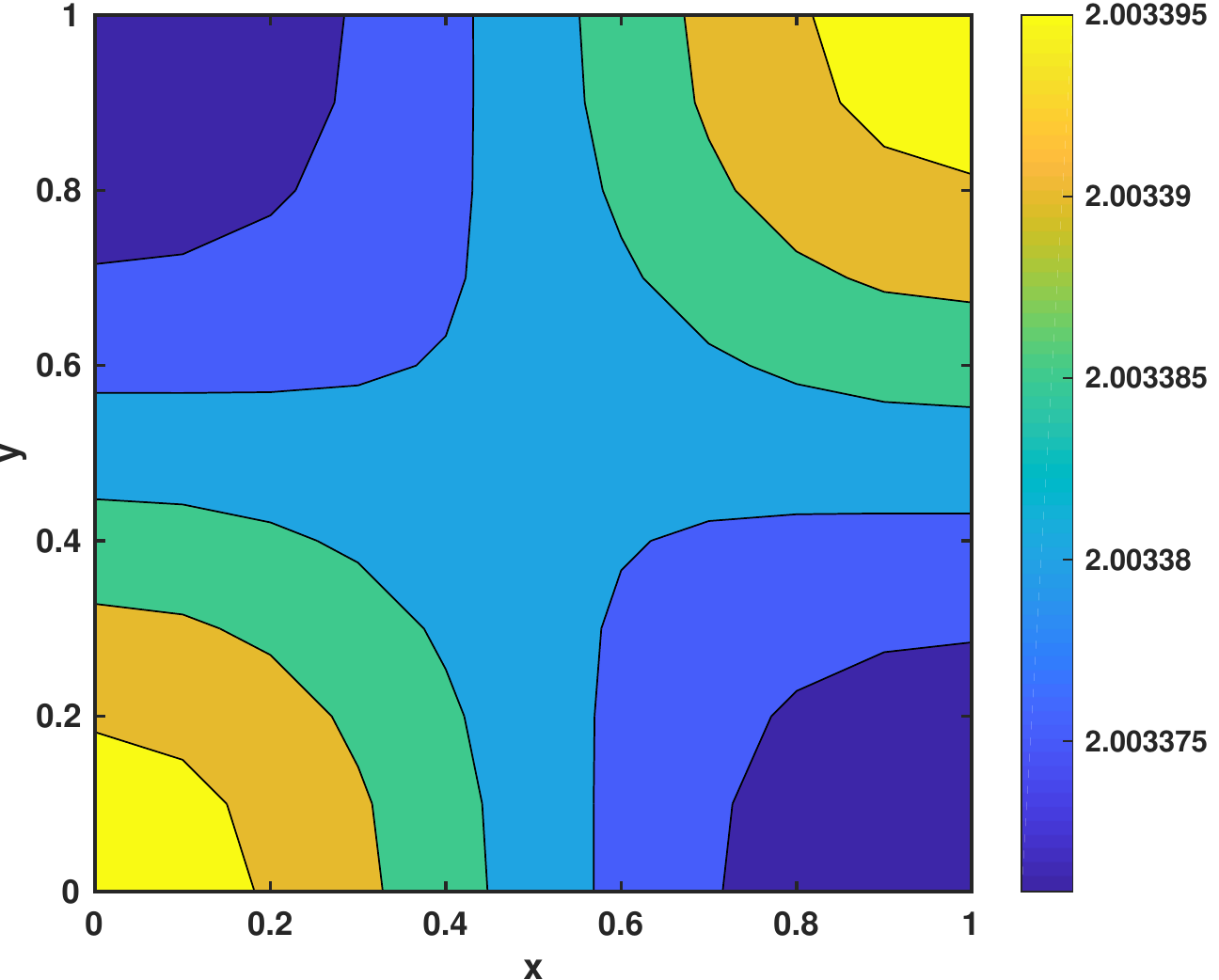}
\includegraphics[width=6.5cm]{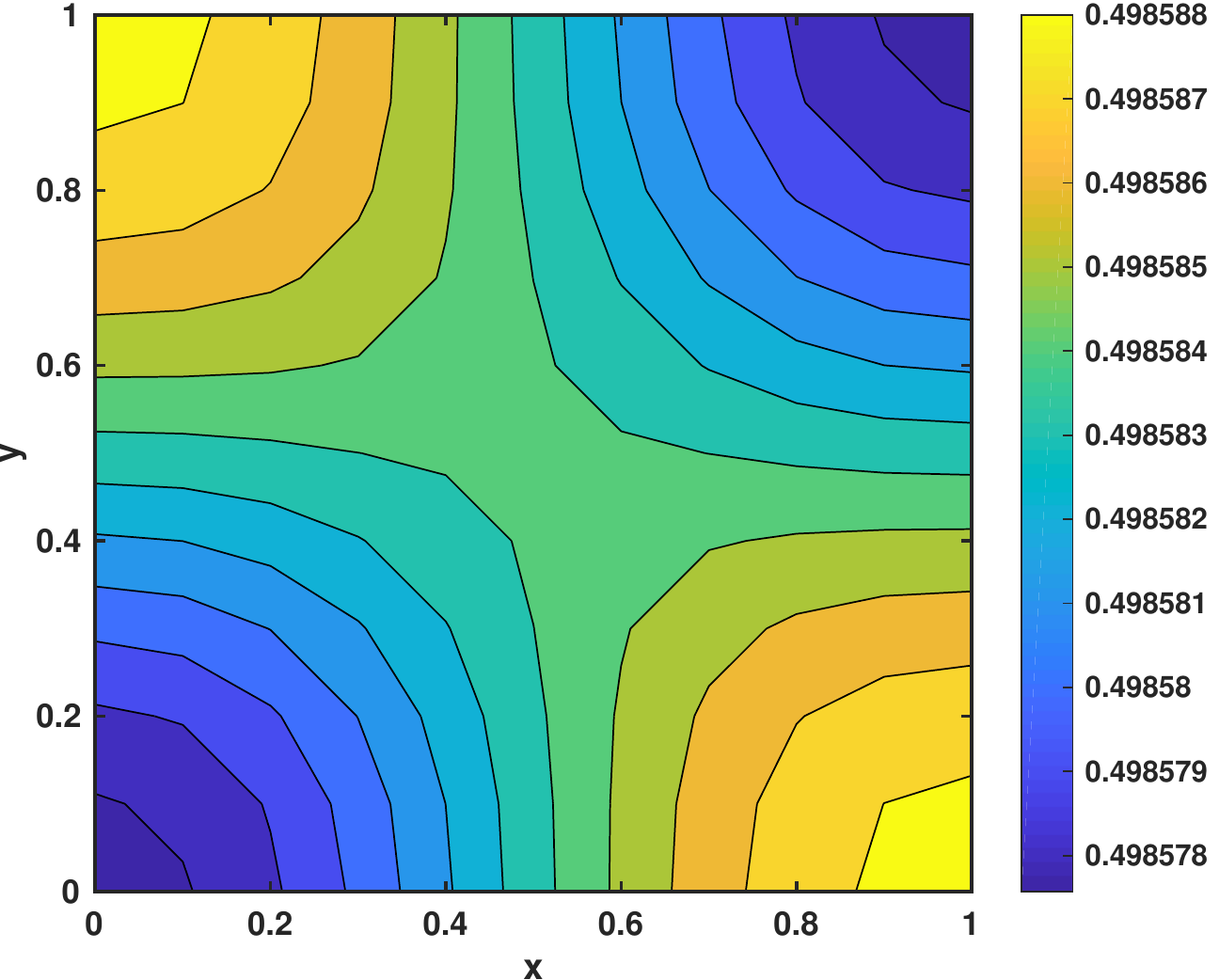}
 \caption{\label{profiles}The concentration profiles of $u_1$ and $u_2$ for $z=1$ at $T=5$ using the parameters $D=0.02$, $A=1$, $B=2$, $h=1/10$ and $k=1/1000$.}
\end{figure}
The two plots agree with the theory of the terminal behavior of the equation proposed in the paper \cite{twizell} since $1-A+B^2 \geq 0$ holds. 
Note that the Matlab program needs $2.87$ seconds to compute the result.
The serial and parallelized Fortran version only need $1.27$ and $0.65$ seconds, respectively.
In Figure \ref{profilespoint} (left) we also show the solutions $u_1$ and $u_2$ at the point $(0.3,0.3,0.3)$ for the time interval $[0,5]$ using the same parameters as before. 
Additionally, we also show in Figure \ref{profilespoint} (right) the solutions $u_1$ and $u_2$ at the point $(1/3,1/3,1/3)$ for the time interval $[0,40]$ using the parameters $D=0.02$, $A=3$, $B=1$, $h=1/10$ and $k=1/1000$. 
Hence, the equation $1-A+B^2 \geq 0$ is violated.
\begin{figure}[!ht]
\centering
\includegraphics[width=6.5cm]{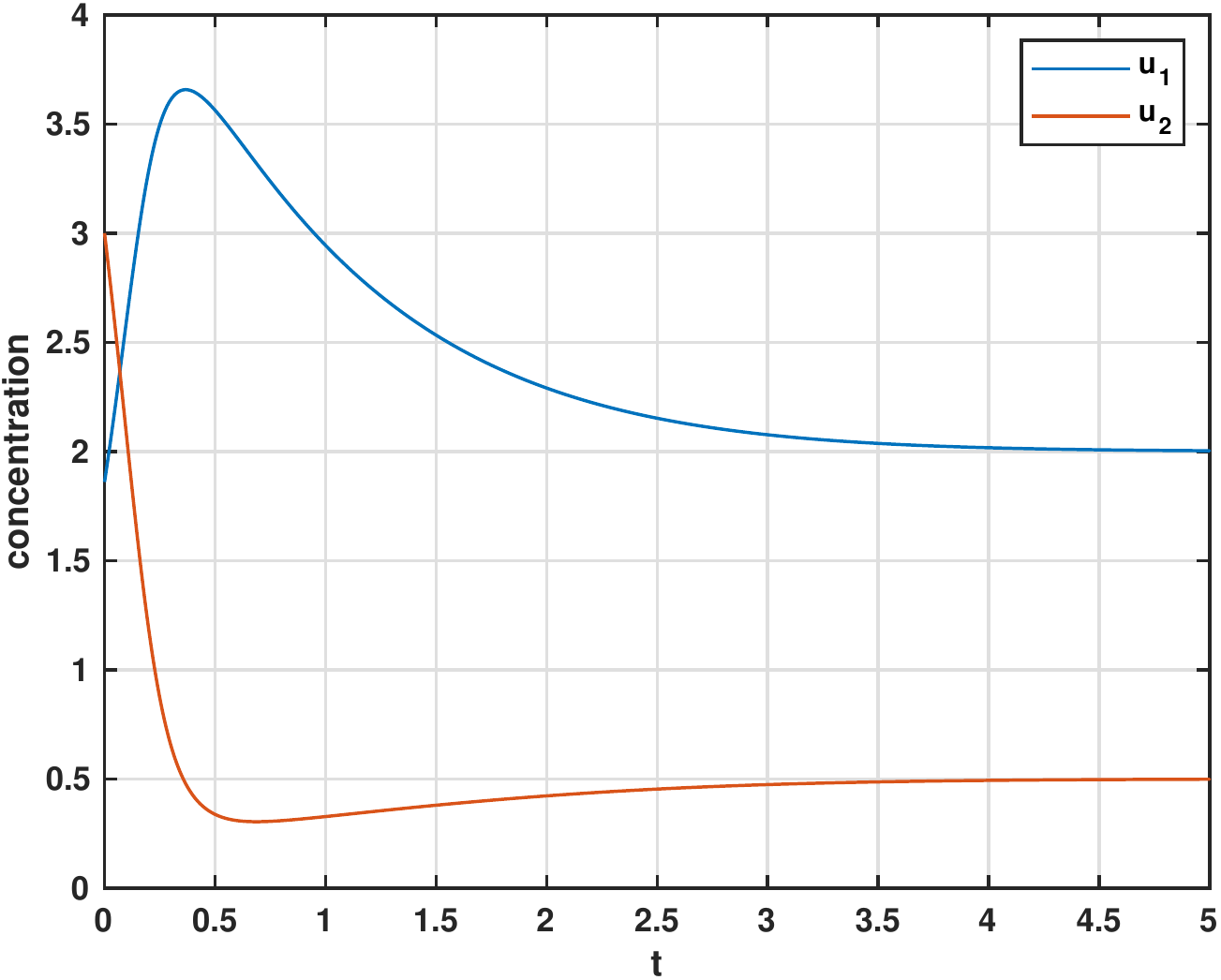}
\includegraphics[width=6.5cm]{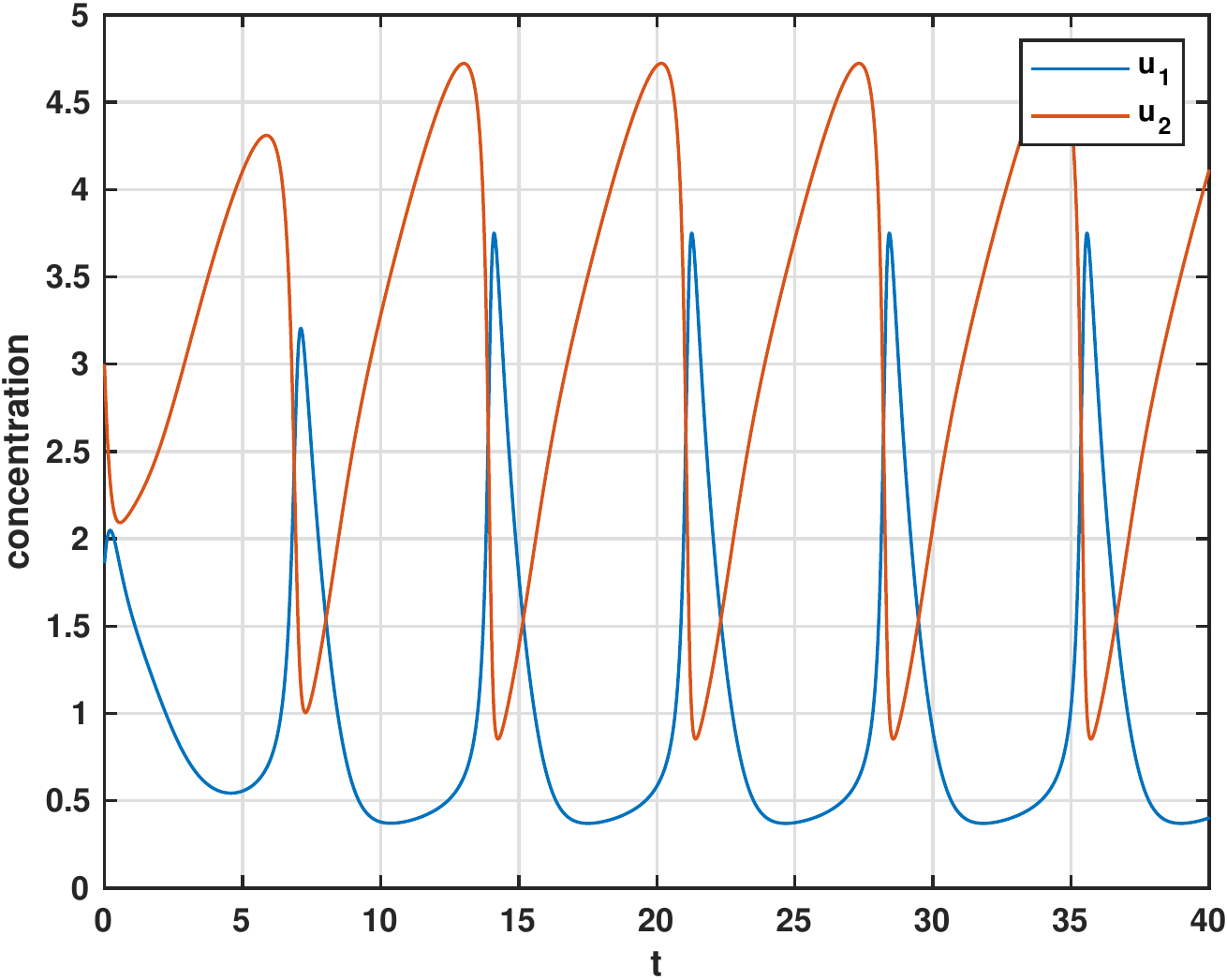}
 \caption{\label{profilespoint}Left: profile of $u_1$ and $u_2$ at the point $(1/3,1/3,1/3)$ for the time interval $[0,5]$ using the parameters $D=0.02$, $A=1$, $B=2$, $h=1/10$ and $k=1/1000$. 
 Right: profile of $u_1$ and $u_2$ at the point $(1/3,1/3,1/3)$ for the time interval $[0,40]$ using the parameters $D=0.02$, $A=3$, $B=1$, $h=1/10$ and $k=1/1000$.}
\end{figure}
As expected, the values approach $B$ and $A/B$ (here $2$ and $1/2$) as $t$ increases, since $1-A+B^2 \geq 0$ is satisfied. However, if we choose $A=3$ and $B=1$, then $1-A+B^2 \geq 0$ is violated 
and we obtain an oscillatory solution. This again is in agreement with \cite{twizell} that the solution does not converge to a fixed concentration. The plots shown in Figure \ref{profilespoint} 
are also in agreement with Figures 10 and 11 within \cite{bhatt}.

\subsection{The complex Ginzburg-Landau equation on a periodic domain}
The complex Ginzburg-Landau equation \cite{ginzburg} has been extensively studied in the physics community. It describes a variety of phenomena such as non-linear 
waves to second-order phase transitions, from superconductivity, superfluidity, and Bose-Einstein condensation to liquid crystals and strings in field theory 
(see \cite{aranson} for a general overview). The complex Ginzburg-Landau equation is given by
\[\frac{\partial u}{\partial t}=u+(1+\mathrm{i}\alpha) \Delta u-\left(1+\mathrm{i}\beta\right)u\left|u\right|^2 \]
where the constants $\alpha$ and $\beta$ are given real-valued parameters describing linear and non-linear dispersion, respectively. Here, the prescribed initial condition is given by a normal random field with 
mean zero and standard deviation one, but could also be a smooth function such as a series of Gaussian pulses. The boundary condition is assumed to be periodic.
Hence, it fits into the format (\ref{model}) with $s=1$, $D=1+\mathrm{i}\alpha$, and $f(u)=u-\left(1+\mathrm{i}\beta\right)u\left|u\right|^2$.

For our experiment, we consider the domain $\Omega=[0,200]^2$ and $t\in (0,100)$ and the parameters $\alpha=0$ and $\beta=1.3$. For the discretization in space, we use $p=400$ ($h=1/2$). For the time step, we use 
$k=1/20$.
To produce the result that is shown in Figure \ref{ginzburg}, we need about 96 seconds with our implementation in Matlab without any parallelization. 
\begin{figure}[!ht] 
\begin{center}
\includegraphics[width=6.5cm]{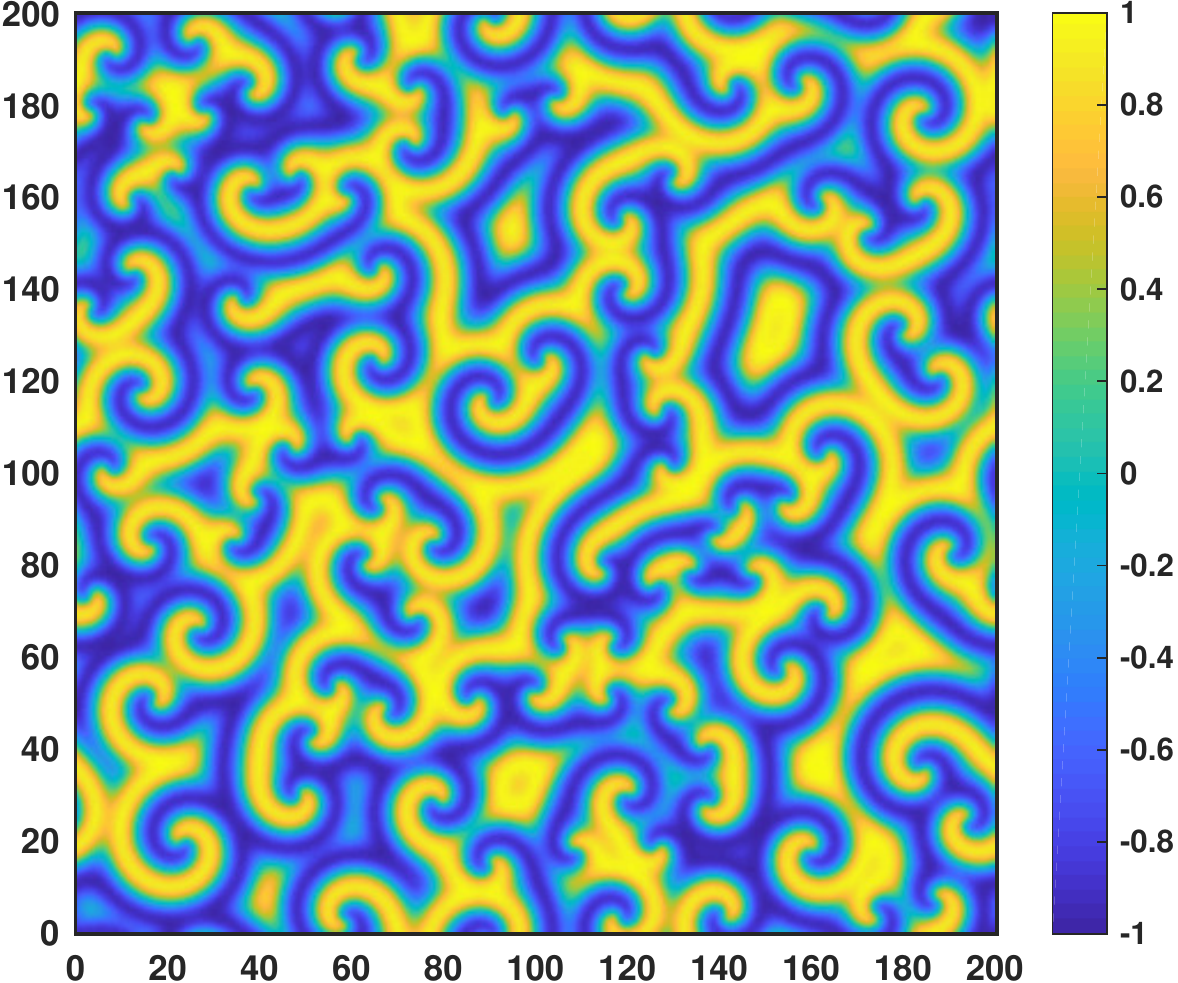}\hspace{0.5cm}
\includegraphics[width=6.5cm]{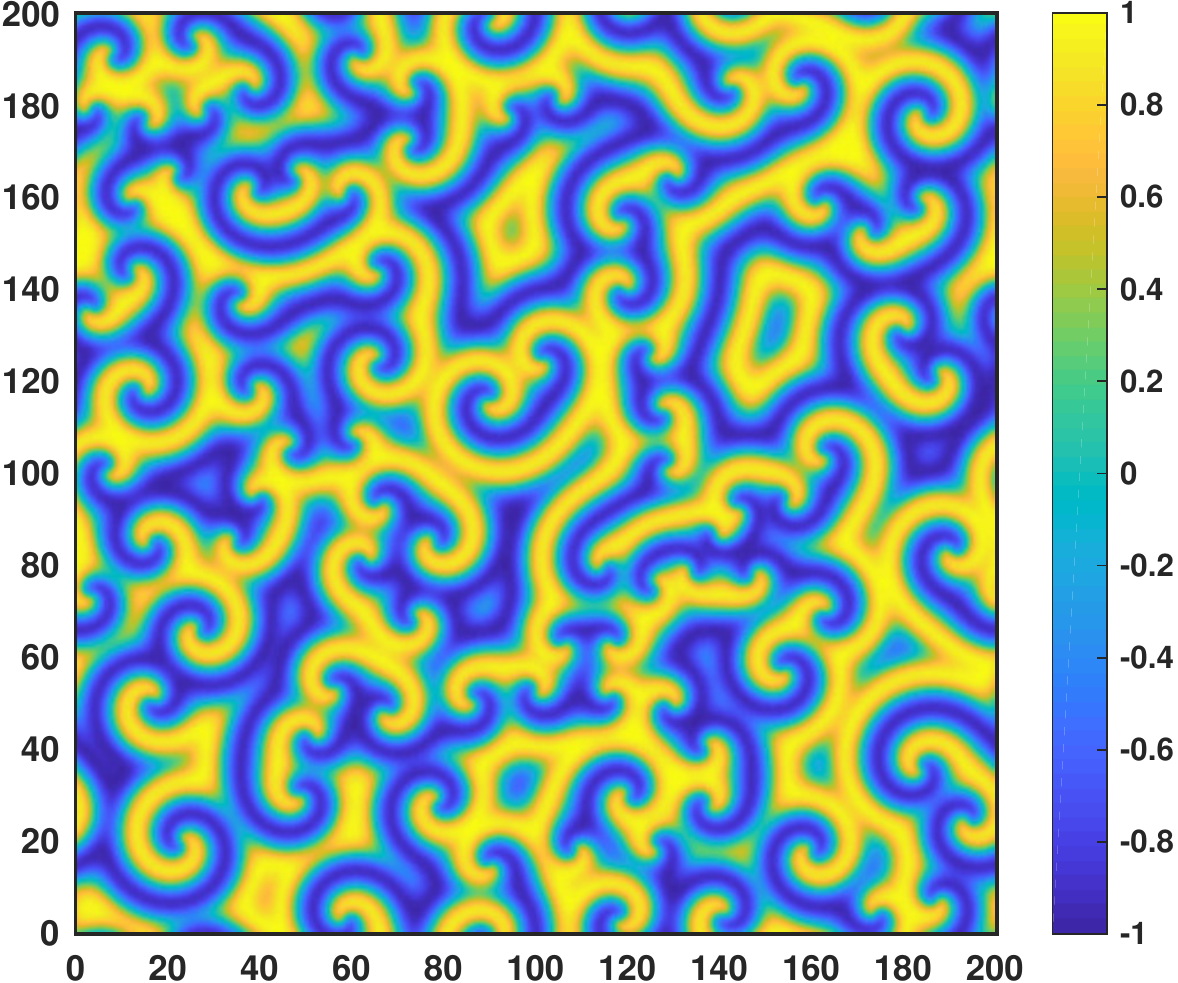}
\end{center}
\caption{\label{ginzburg}Real (left image) and imaginary part (right image) of the solution of the complex Ginzburg-Landau equation on the periodic domain $\Omega=[0,200]^2$ for time $T=100$ with the parameters 
$\alpha=0$, $\beta=1.3$, $p=400$, and $k=1/20$ 
using a standard normal random field as initial condition.}
\end{figure}
Thus, we obtain a comparable performance with the numerical method that is based on 
Fourier spatial discretization and a fourth-order exponential time differencing Runge-Kutta (ETDRK4) method that is usually faster than the standard finite difference scheme (see \cite{kassamtrefethen}). 
However, special attention has to be drawn to the time stepping in Fourier space to avoid aliasing effects. The implementation of the Fourier spectral ETDRK4 method in Matlab 
with the same set of parameters as above needs only 28 seconds (see p. 11 for the Matlab implementation within \cite{kassam}). Of course, we have only compared the timing and 
not the approximation quality of the solution as one should. Further, our method is $L$-stable in comparison to the EDTRK4 and one could create a situation where ETD-RDP-IF outperforms ETDRK4 in the sense 
of approximation quality or more precisely in the sense of CPU time versus accuracy.

In sum, we obtain a stable solution although the initial data are non-smooth. We obtain similar solution patterns, when we use smooth initial boundary conditions such as a series of Gaussian pulses in the form
$u(x,y,0)=\mathrm{e}^{-((x-50)^2+(y-50)^2)/1000}-\mathrm{e}^{-((x-100)^2+(y-100)^2)/1000}+\mathrm{e}^{-((x-100)^2+(y-50)^2)/1000}$ with the same set of parameters as before. 
We again need about 96 ($95.65$ to be precise) seconds in Matlab without parallelization to produce the images shown in Figure \ref{ginzburg2}. 
\begin{figure}[!ht] 
\begin{center}
\includegraphics[width=6.5cm]{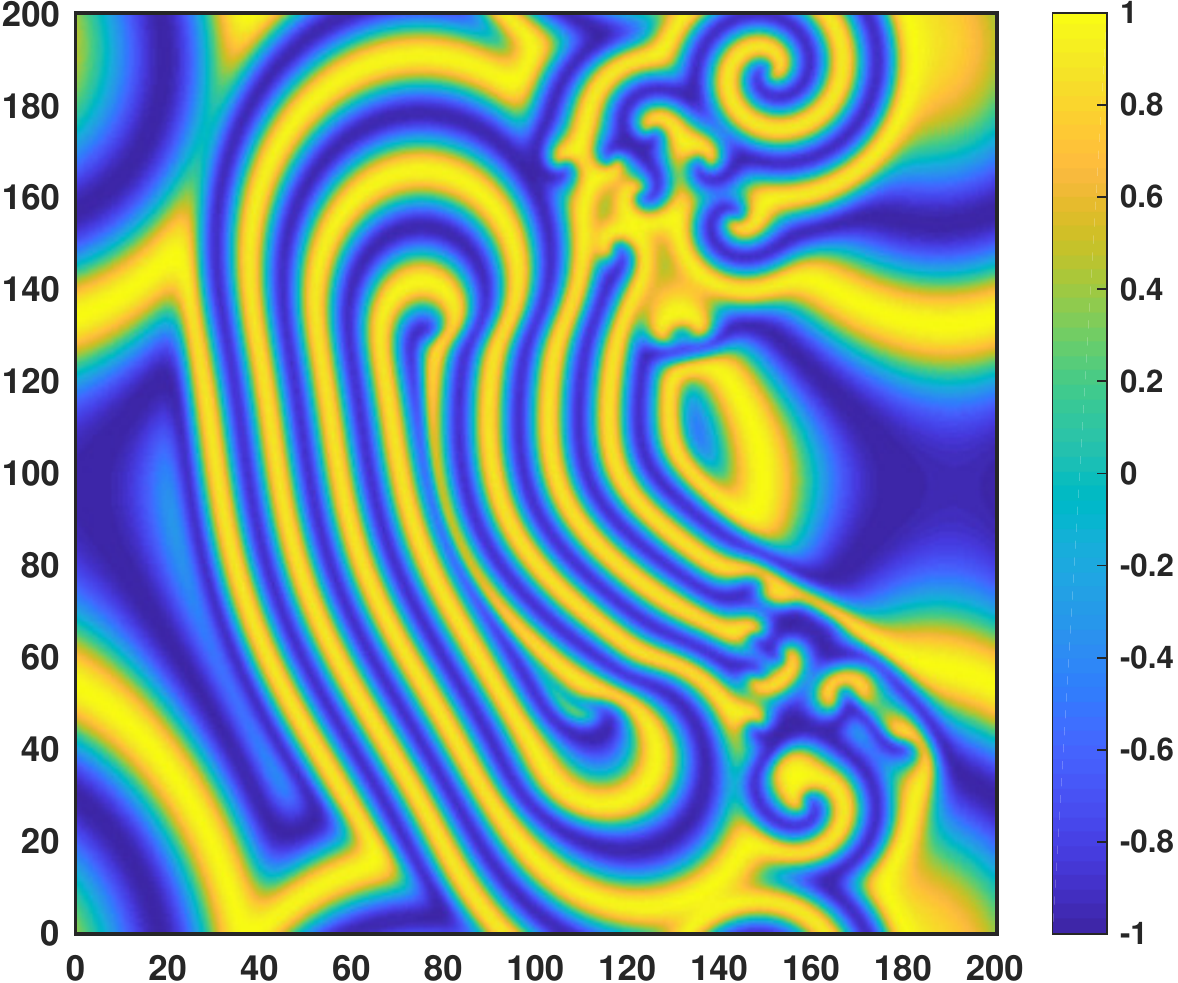}\hspace{0.5cm}
\includegraphics[width=6.5cm]{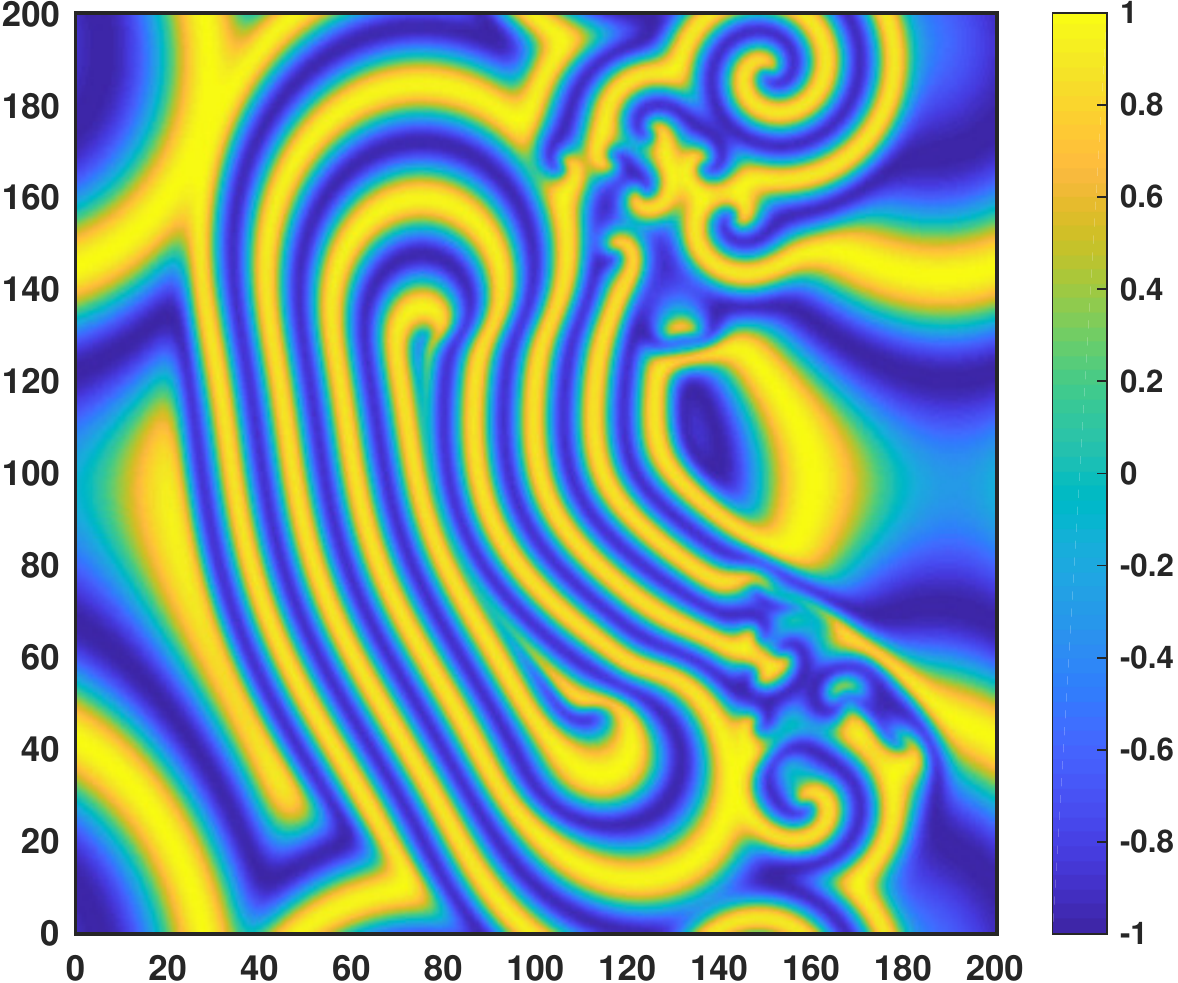}
\end{center}
\caption{\label{ginzburg2}Real (left image) and imaginary part (right image) of the solution of the complex Ginzburg-Landau equation on the periodic domain $\Omega=[0,200]^2$ for time $T=100$ with the parameters 
$\alpha=0$, $\beta=1.3$, $p=400$, and $k=1/20$ 
using a series of Gaussian pulses field as initial condition.}
\end{figure}
The serial Fortran version needs $4.99$ seconds and the parallelized Fortran version needs only $2.76$ seconds. Refer also to the last row in Table \ref{tableginz}.
\begin{table}[!ht]
\centering
\begin{tabular}{rc|rrr}
 $p$ & $h$ & Matlab & Fortran (1 thread) & Fortran (3 threads)\\
 \hline
 $50$ & $4$   & 1.24 & 1.23 & 0.51\\
 $100$& $2$   & 5.17 & 4.95 & 2.04\\
 $200$& $1$   & 23.11 & 20.85 & 10.34 \\
 $400$& $1/2$ & 95.65 & 86.66 & 53.98 \\
 \hline
 \end{tabular}
 \caption{\label{tableginz}CPU times in seconds for the ETD-RDP-IF method implemented in Matlab and Fortran (serial and parallelized version) for the Ginzburg-Landau example on periodic domain $\Omega=[0,200]^2$ 
 using the parameters $\alpha=0$, $\beta=1.3$, $k=1/20$, and $T=100$.}
\end{table}

A similar situation arises when we consider the three-dimensional case. The ETDRK4 using a spectral method needs only 27 seconds in Matlab with the parameters 
$\alpha=0$, $\beta=1.3$, $p=50$ ($h=2$), and $k=1/20$ for time $T=100$ and $\Omega=[0,100]^3$ whereas our method in Matlab (without any parallelization) needs about 119 seconds, which is longer as expected 
due to the second-versus-fourth order of the schemes. The serial and parallelized Fortran program need $108.21$ and $57.87$ seconds, respectively.

We now consider $\Omega=[0,200]^3$ with the same parameters as before and increase 
the parameter $p$ to $200$. That means, we have $8,000,000$ spatial discretization points. Gaussian pulses as initial condition of the form
$u(x,y,z,0)=\mathrm{e}^{-((x-50)^2+(y-50)^2+(z-50)^2)/1000}-\mathrm{e}^{-((x-100)^2+(y-100)^2+(z-100)^2)/1000}$ are used, which
yields Figure \ref{ginzburg3}.
\begin{figure}[!ht] 
\begin{center}
\includegraphics[width=6.5cm]{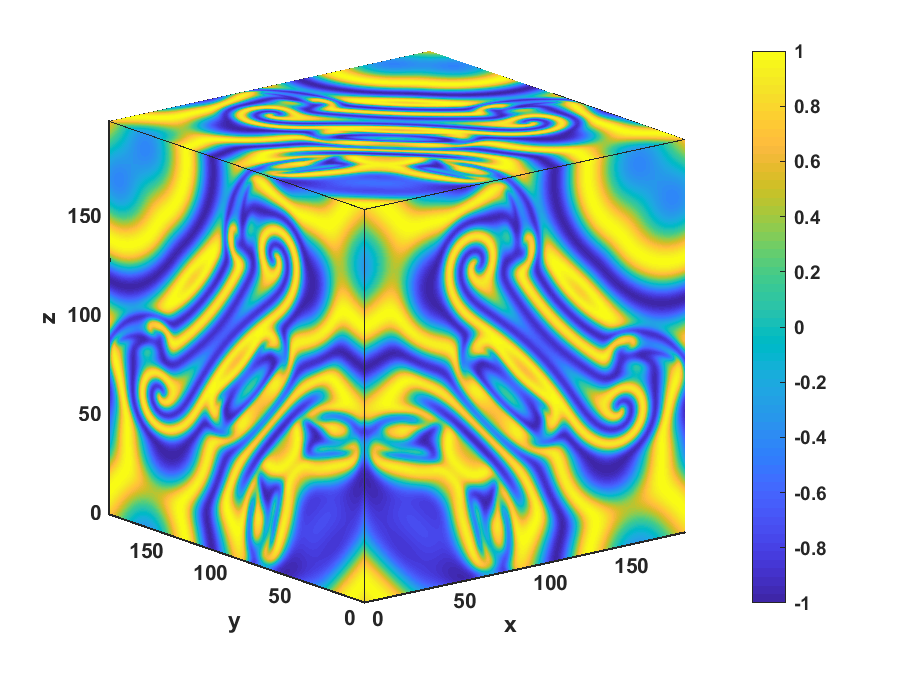}\hspace{0.5cm}
\includegraphics[width=6.5cm]{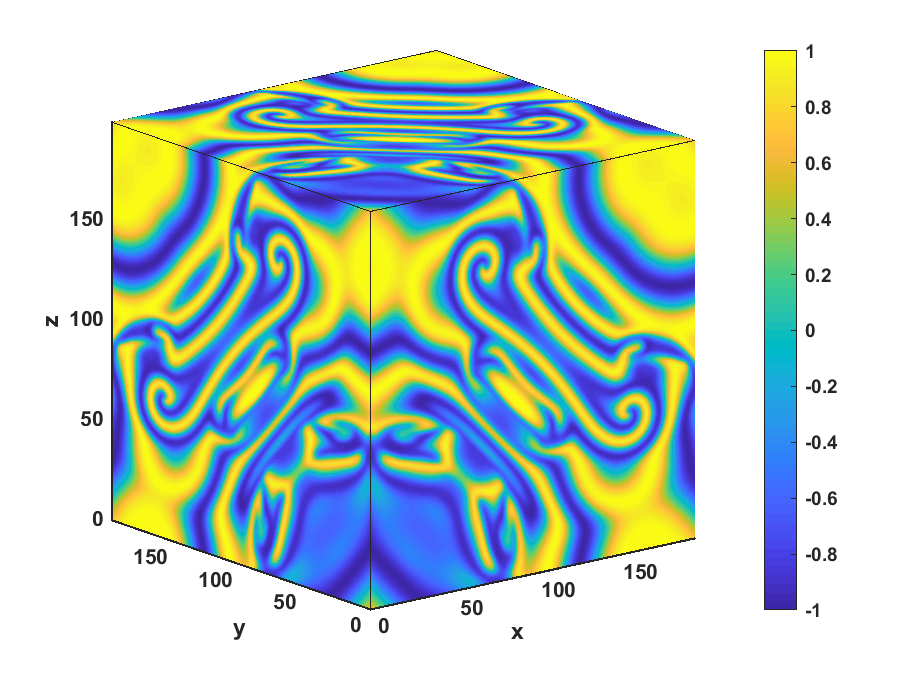}
\end{center}
\caption{\label{ginzburg3}Real (left image) and imaginary part (right image) of the solution of the complex Ginzburg-Landau equation on the periodic domain $\Omega=[0,200]^3$ for time $T=100$ with the parameters 
$\alpha=0$, $\beta=1.3$, $p=200$, and $k=1/20$ 
using a series of Gaussian pulses field as initial condition.}
\end{figure}
The Matlab program needs $9673.13$ seconds whereas the serial and parallelized Fortran program need $8260.91$ and $5287.64$ seconds, respectively. Refer also to the last row in Table \ref{tableginz2}.
\begin{table}[!ht]
\centering
\begin{tabular}{rc|rrr}
 $p$ & $h$ & Matlab & Fortran (1 thread) & Fortran (3 threads)\\
 \hline
 $50$ & $4$  &  119.61&    108.37 &   57.94\\
 $100$& $2$ &  1033.63&    912.41 &  585.91\\
 $200$& $1$ &  9673.13&    8260.91& 5287.64\\
 \hline
 \end{tabular}
 \caption{\label{tableginz2}CPU times in seconds for the ETD-RDP-IF method implemented in Matlab and Fortran (serial and parallelized version) for the Ginzburg-Landau example on periodic domain $\Omega=[0,200]^3$ 
 using the parameters $\alpha=0$, $\beta=1.3$, $k=1/20$, and $T=100$.}
\end{table}

\subsection{Schr\"odinger equation}
The Schr\"odinger equation \cite{schroedinger} is a linear PDE and it describes a state function of a quantum-mechanical system (see for example \cite{griffiths} for more details).
However, in this section, we consider an extension of it. Precisely, we focus on the $d$-dimensional non-linear cubic Schr\"odinger equation of the form
 \begin{equation}\label{Shrodinger}
 \mathrm{i}\Psi_t + \Delta \Psi=q\left(|\Psi|^2\right)\Psi\,, \qquad \mathbf{x}\in [0,1]^d\,,\quad t\in (0,T) 
 \end{equation}
 with given initial condition $\Psi_0(\mathbf{x})=\Psi(\mathbf{x},0)$ and homogeneous Neumann boundary condition. Here, $q$ is a given function of $|\Psi|^2$.
 The wave function $\Psi(\mathbf{x},t)$ can be written as $v(\mathbf{x},t)+ \mathrm{i}w(\mathbf{x},t)$ and hence, we obtain the coupled system of equations
 \begin{align*}
 v_t + \Delta w &= \,\,\,\,q(v^2+w^2)w\\
 w_t - \Delta v &= -q(v^2+w^2)v
 \end{align*}
 which can be written as
\begin{equation*}\label{Shrodinger2}
\begin{pmatrix} v_t\\ w_t \end{pmatrix} + \begin{pmatrix} 0 & 1\\ -1 & 0 \end{pmatrix}\begin{pmatrix}\Delta v\\ \Delta w \end{pmatrix} = q(u^2+w^2)\begin{pmatrix} 0 & 1\\ -1 & 0 \end{pmatrix}\begin{pmatrix} v\\ w \end{pmatrix}.
\end{equation*}
By setting $\mathbf{u} = \begin{pmatrix} v\\ w \end{pmatrix} \text{ and } \textbf{D} = \begin{pmatrix} 0 & 1\\ -1 & 0 \end{pmatrix}$ we can rewrite the system as 
\begin{equation*}
\mathbf{u}_t + \mathbf{D}\Delta \mathbf{u}= q(|\mathbf{u}|^2)\mathbf{D}\mathbf{u}
\end{equation*}
which fits into the format (\ref{model}). However, note that $\textbf{D}$ is not a diagonal matrix. Using the approximation of the Laplacian as in Example \ref{ex1} and Example \ref{ex2}, one can 
again verify similarly to Lemma \ref{lemma2} that the dimensional splitting commutes and hence the ETD-RDP-IF method can be used.

First, we present numerical results for the one-dimensional non-linear cubic Schr\"odinger equation of the form (see also \cite{Bratsos2015}*{Section 4})
 \begin{equation}\label{Shrodinger1D}
 \mathrm{i}\Psi_t + \Psi_{xx} +|\Psi|^2\Psi=0\,, \qquad -L_0 < x< L_1\,,\quad  0<t<T 
 \end{equation}
 with initial condition  \[ \Psi(x,0) = \sqrt{2a}\,\mathrm{exp} \left(\mathrm{i}\,\frac{c}{2}x \right) \mathrm{sech}(\sqrt{a}x) \] 
 and homogeneous Neumann boundary condition. The function  
 \[ \Psi(x,t) = \sqrt{2a}\,\mathrm{exp} \left[i \left(\frac{c}{2}x-\left\{\frac{c^2}{4}-a\right\}t\right) \right] \mathrm{sech}(\sqrt{a}(x-ct))\, \]
 satisfies the Schr\"odinger equation and the Neumann boundary condition as $|x|$ approaches infinity.
 The total mass $M(t)$ and energy $E(t)$ of the wave function are given by 
 \begin{align*}
 M(t) &= \int_{\mathbb{R}} |\Psi(x,t)|^2\,\mathrm{d}x\,,\\
 E(t) &= \int_{\mathbb{R}} \left(|\Psi_x(x,t)|^2 - \frac{1}{2}|\Psi(x,t)|^4 \right)\;\mathrm{d}x\,,
 \end{align*}
 respectively. For the purpose of numerically verifying the conservation of these quantities, we approximate the integrals by the trapezoidal rule (as in \cite{Bratsos2015}*{Section 4.4})
  \begin{align*}
 M(t_n) &\approx \frac{h}{2}\left[ |\mathbf{U}_0^n|^2 + 2 \sum_{i=1}^p  |\mathbf{U}_i^n|^2 +  |\mathbf{U}_{p+1}^n|^2 \right] \\
 E(t_n) &\approx  \frac{h}{2} \left[-\frac{1}{2} (|\mathbf{U}_0^n|^4 +  |\mathbf{U}_{p+1}^n|^4)+ 2  \sum_{i=1}^p  \left( \left|\frac{\mathbf{U}_{i+1}^n- \mathbf{U}_{i-1}^n}{2h}\right|^2 - \frac{1}{2} |\mathbf{U}_i^n|^4  \right)\right]\,.
 \end{align*}
 For the numerical results we used the parameters $a = 0.01$, $c = 0.1$, $T=108$, $h = 1/2$, and $\Delta t = 1/100$ within the interval $[L_0,L_1]=[-80,100]$. 
 A plot of the exact solution (defined on $\mathbb{R}$) and the approximated solution (defined on $[-80,100]$) with computing time less than five seconds in Matlab reveals 
 visually no difference and agrees with the one given in \cite{Bratsos2015}*{Figure 4.1}. 
 The exact values for $M(0)$ and $M(108)$ can be obtained by basic integration techniques from calculus. They are both $2/5=0.4$. 
 Numerically, we obtain $0.399\,999\,954\,123\,281$ and $0.399\,999\,954\,128\,036$, respectively. For the energy $E(0)$ and $E(108)$, we obtain the exact value $-1/3000\approx -0.000\,333\,333\,333\,333$. 
 We obtain numerically $-0.000\,336\,760\,546\,294$ and $-0.000\,336\,768\,976\,796$. Hence, the mass and the energy are conserved asymptotically for the approximated solution obtained via ETD-RDP-IF.
 
 Next, we consider the two-dimensional Schr\"odinger equation within the unit square and the function $q(|\mathbf{u}|^2)=B(x,y)+C(x,y)|\mathbf{u}|^2$ 
 with $B(x,y)=(1-2\pi^2)(1-\cos^2(\pi x)\cos^2(\pi y))$ and $C(x,y)=(1-2\pi^2)$. The exact solution is given by $\Psi(x,y,t) = \mathrm{e}^{-\mathrm{i}t}\cos(\pi x)\cos(\pi y)$ (see also \cite{mehdi}*{p. 754}). 
 We use the parameters $T=1$, $k=0.0125$, and $h=1/78$ to create the same plot as in \cite{mehdi}*{Figure 5} within one second. 
 Note that we obtain similar timing improvements for the parallelized Fortran version in comparison to the serial Matlab program as in the Brusselator example.
 For some recent results on more complex domains, we refer the reader to \cite{speck}.

\section{Conclusion} \label{section9} 
In this article, we have combined the second-order exponential time differencing method (ETD) of second-order with approximating the matrix exponential through (non-Pad\'e) rational approximation having real simple and 
distinct poles (RDP) and the integrating factor (IF)
dimensional splitting technique to obtain the second-order $L$-stable ETD-RDP-IF scheme in two and three dimensions. With this scheme, one can solve non-linear 
reaction-diffusion equations with either Dirichlet, Neumann, 
or periodic boundary conditions in either two or three dimensions. The new scheme outperforms other second-order IMEX schemes such as IMEX-BDF2, IMEX-TR, and IMEX-Adams2 as well as ETD-RDP. 
The excellent performance for 2D and above all 3D examples can be further enhanced through the conversion from Matlab to Fortran. Using basic parallelization techniques increases the efficiency further. 
The source code is available under the link
\begin{center}
\texttt{\href{https://github.com/kleefeld80/ETDRDPIF}{https://github.com/kleefeld80/ETDRDPIF}}
\end{center}
In the future, we intend to extend these ideas to develop methods with higher order. Additionally, incorporating an adaptive time selection procedure to these methods might further increase the performance.
As an alternative one might want to consider and apply parallel-in-time procedures (see for example \cite{Gander2015} for an overview).

\section*{Acknowlegement}
The collaboration of the authors arose in C\'adiz, Spain during the 
international Conference on Computational and Mathematical Methods in Science and Engineering (CMMSE'19). We would like to thank the organizer, Jes\'us Vigo-Aguiar, for valuable support and encouragement.
\bibliography{bibo}

\end{document}